\theoremstyle{thmstyleone}%
\newtheorem{theorem}{Theorem}%  meant for continuous numbers
\newtheorem{proposition}[theorem]{Proposition}% 
\theoremstyle{thmstyletwo}%
\theoremstyle{thmstylethree}%
\newcommand{\norm}[1]{\left\lVert #1\right\rVert}
\newtheorem{lemma}{\bf Lemma}[section]
\begin{document}

\title[Article Title]{A note on improvement by iteration for the approximate solutions of second kind Fredholm integral equations with Green's kernels}

\author*[1]{\fnm{Gobinda} \sur{Rakshit}}\email{g.rakshit@rgipt.ac.in}

\author[2]{\fnm{Shashank K.} \sur{Shukla}}\email{shashankks@rgipt.ac.in}
%\equalcont{These authors contributed equally to this work.}

\author[3]{\fnm{Akshay S.} \sur{Rane}}\email{akshayrane11@gmail.com}
%\equalcont{These authors contributed equally to this work.}

\affil[1,2]{\orgdiv{Department of Mathematical Sciences}, \orgname{Rajiv Gandhi Institute of Petroleum Technology}, \orgaddress{\street{Jais}, \city{Amethi}, \postcode{229304}, \state{Uttar Pradesh}, \country{India}}}

%\affil[2]{\orgdiv{Mathematical Sciences}, \orgname{Rajiv Gandhi Institute of Petroleum Technology}, \orgaddress{\street{Jais Campus}, \city{Raebareli}, \postcode{229304}, \state{Uttar Pradesh}, \country{India}}}

\affil[3]{\orgdiv{Department of Mathematics}, \orgname{Institute of Chemical Technology}, \orgaddress{\street{Nathalal Parekh Marg, Matunga}, \city{Mumbai}, \postcode{400019}, \state{Maharashtra}, \country{India}}}

\abstract{Consider a linear operator equation $x - Kx = f$, where $f$ is given and $K$ is a Fredholm integral operator with a Green's function type kernel defined on $C[0, 1]$. For $r \geq 0$, we employ the interpolatory projection at $2r + 1$ collocation points (not necessarily Gauss points) onto a space of piecewise polynomials of degree $\leq 2r$ with respect to a uniform partition of $[0, 1]$.
	Previous researchers have established that, in the case of smooth kernels with piecewise polynomials of even degree, iteration in the collocation method and its variants improves the order of convergence by projection methods. In this article, we demonstrate the improvement in order of convergence by modified collocation method when the kernel is of Green's function type.}

\keywords{Fredholm integral equation, Green's kernel, Interpolatory operator, Collocation points}

\pacs[AMS Mathematics subject classification]{41A35, 45B05, 45L05, 65R20}

\maketitle

\section{Introduction}

Let $X = C[0, 1]$, and $\Omega = [0, 1] \times [0, 1]$. Let $\kappa : \Omega \rightarrow \mathbb{R}$ be a real-valued continuous function. Consider the following Fredholm integral operator
\begin{equation} \label {Eq: 1}
	(Kx)(s) = \int_{0}^{1} \kappa(s, t) x(t)~dt, \quad x \in X, \quad s \in [0, 1]. 
\end{equation}
Then $K : C[0, 1] \rightarrow C[0, 1]$ is a compact linear operator.
Consider the following Fredholm integral equation
\begin{equation} \label {Eq: 2}
	x - Kx = f, 
\end{equation}
where $f \in X$ is given. Assume that $1$ is not an eigenvalue of $K$, that is $I - K$ is continuously invertible. So, the equation \eqref{Eq: 2} possesses a unique solution (say) $\varphi$.\\
As it is challenging to find an exact solution of \eqref{Eq: 2}, our focus turns towards finding approximate solution of this equation. One approach involves collocation methods in which we choose a sequence of continuous finite rank interpolatory projections that point-wise converges to the identity operator. These projections are used to define operators that approximate the operator $K$.  

Let $X_n$ be a sequence of finite dimensional approximating subspaces of $X$ and let $P_n$ be a sequence of interpolatory projections from $X$ to $X_n$. There are various techniques for obtaining approximate solutions of \eqref{Eq: 2} based on interpolation discussed in \cite{KEA0}, \cite{Cha1}, \cite{RPK1}, \cite{Qun}. In the collocation method, the equation \eqref{Eq: 2} is approximated by
\begin{equation*} 
	\phi_{n}^C - P_nK\phi_{n}^C = P_nf.
\end{equation*}
and the iterated collocation solution is defined by
\begin{equation*} 
	\phi_{n}^S = K\phi_{n}^C + f.
\end{equation*}
The above iterated collocation method was introduced by Sloan \cite{Sl1}.\\
In Kulkarni \cite{RPK1}, the following modified collocation method of approximation has been introduced:
\begin{equation*} 
	\phi_{n}^M - K_{n}^M\phi_{n}^M = f,
\end{equation*}
where
\[	K_n^M = P_nK + KP_n - P_nKP_n.\]
The iterated modified collocation solution is defined as
\begin{equation*} 
	\tilde{\phi}_n^M=K\phi_n^M+f. 
\end{equation*} 
Let a uniform partition of $[0,1]$ be defined as $\{0 = t_0 < t_1 < \cdots < t_n = 1\}$, where $t_j = \frac{j}{n}$ for $j = 1,2, \ldots, n$. Let $h = \frac{1}{n}$. For $r \geq 0$, let $X_n$ be the space of piecewise polynomials of degree $ \le 2r$ associated with the above partition.
Choose $2r +1$ distinct points in each subinterval as collocation points, and let $P_n: X \rightarrow X_n$ be the interpolatory projection at the collocation points. Then, $P_nx|_{[t_{j-1}, t_j]}$ is a polynomial of degree $\leq 2r$ for each $j = 1, 2, \ldots, n$. 
If $f \in C^{2r +1}([0, 1])$, then $\phi \in C^{2r +1}([0, 1])$. (See \cite{Cha-Leb1}). If $\kappa \in C^{2r+1}(\Omega)$, then from \cite[Theorem 3.1]{RPK1}, we have
\begin{equation*} 
	\norm{\phi - \phi_{n}^C}_\infty = O\left(h^{2r +1}\right), \quad \norm{\phi - \phi_{n}^M}_\infty = \norm{\phi - \tilde{\phi}_n^M}_\infty = O\left(h^{4r + 2}\right).
\end{equation*}
Here, one step of iteration does not improve the order of convergence. Now, divide each subinterval $[t_{j-1}, t_j]$ by $2r$ parts and choose the nodes as the $2r+1$ collocation points. Then we have the following results in the case of sufficiently smooth kernel,
\begin{equation*}
	\norm{\phi - \phi_{n}^C}_\infty = O\left(h^{2r +1}\right),  \quad \norm{\phi - \phi_{n}^S}_\infty = O\left(h^{2r +2}\right).
\end{equation*}
(See Atkinson \cite[Section 3.4.3]{KEA0}.)
In this case, the modified and iterated modified projection has been considered in Rane \cite{RANE}, which gives the following orders of convergence. If $\kappa \in C^{4r + 2}(\Omega)$ and $\phi \in C^{4r}([0, 1])$, then
\begin{equation*}
	\norm{\phi - \phi_n^M }_\infty = O(h^{4r + 3}), \quad \norm{\phi - \tilde{\phi}_n^M }_\infty = O(h^{4r + 4}).
\end{equation*}
However, in the above cases, the iterated solutions (i.e. $\phi_{n}^S$ and $\tilde{\phi}_n^M$) converge to $\phi$ faster than $\phi_{n}^C$ and $\phi_{n}^M$, respectively.

In this paper, our aim is to obtain the orders of convergence for the iterated collocation solution ($\phi_{n}^S$), modified collocation solution ($\phi_n^M$) and its iterated version $\tilde{\phi}_n^M$ in the case of the Fredholm integral operator with the Green's function type kernel using the same collocation points as mentioned above. 

The paper is organized as follows. In Section 2, we establish the framework for our analysis by defining the approximating space, interpolatory projection, and Green's function type kernels. We also introduce some important notations in this section. Towards the end of this section, we prove two crucial lemmas based on the divided difference of $Kx$. Section 3 is dedicated to establishing some crucial estimates which will be used to obtain our main results. Finally, the conclusions from our study are drawn in section 4.
%%%%%%%%%%%%%%%%%%%%%%%%%%%%%%%%%%%%%%%%%%%%%%%%%%%%%%%%%%%%%%%%%%%%%%%%%%%%%%%%%%%%%%%%%%%%%%%%%%%%%%%%%%%%%%%%%%%%%
%%%%%%%%%%%%%%%%%%%%%%%%%%%%%%%%%%%%%%%%%%%%%%%%%%%%%%%%%%%%%%%%%%%%%%%%%%%%%%%%%%%%%%%%%%%%%%%%%%%%%%%%%%%%%%%%%%%%%
\section{Preliminaries}
Let $X=C[0,1]$ be the space of all real valued continuous functions on $[0,1]$. The Fredholm integral operator $K: X \to X$ is given as 
$$
(Kx)(s) = \int_{0}^{1} \kappa(s, t) x(t)~dt,  \quad x \in X, \quad s \in [0, 1],
$$
where the kernel $\kappa(s, t)$ is a real-valued continuous function of Green's function type.
Note that, for $r \ge 1$, it is assumed that $\alpha > r$, and the kernel $\kappa$ has the following properties:\\
Let 
$$
\Omega_1= \{(s,t) : 0\leq t\leq s\leq 1\}, \quad \Omega_2= \{(s,t) : 0\leq s\leq t\leq 1\}.
$$
There exist functions $\kappa_i \in C^\alpha(\Omega_i), \; i=1,2$, with 
\begin{equation*}
	\kappa(s,t) ~= ~
	\begin{cases}
		\kappa_1(s,t)    ~~~          & (s,t) \in \Omega_1,\\
		\kappa_2(s,t)    ~~~          & (s,t) \in \Omega_2,\\
	\end{cases} 
\end{equation*} 
and $\kappa_1(s,s)=\kappa_2(s,s).$ Such a kernel $\kappa$ is referred as Green's function type of kernel. Clearly, $\kappa$ is a non-smooth kernel. We use the following notations throughout the article:\\
Let $j, k$ be non-negative integer and we set
$$
D^{(j,k)}\kappa_i(s,t)=\frac{\partial^{j+k}\kappa_i}{\partial s^j \partial t ^k} (s,t), \quad \text{for} \; i=1,2.
$$
Define
\begin{equation*}
	C_1= \max_{j, k =0,1,2}\bigg \{ \sup_{ (s,t) \in \Omega_i} \left| D^{(j,k)} \kappa_i(s,t)\right| : \; \text{for}\; i=1, 2\bigg \},
\end{equation*}  
%Note that 
%\begin{equation*}
%	C_2= \max_{j, k =0,1,2} \bigg \{ \sup_{0 \le t \le s \le 1} \left| D^{(j,k)} \kappa_1(s,t)\right|, \sup_{0 \le  s \le t \le 1} \left| D^{(j,k)} \kappa_2(s,t)\right| \bigg \}.
%\end{equation*}  
For  $l=1,2,$ we define
\begin{equation*}
	M_l = \max_{1 \le k \le n}\bigg \{ \sup_{t_{k-1} \le t \le s \le t_k} \left| D^{(l,0)} \kappa_1(s,t)\right|, \sup_{t_{k-1} \le  s \le t \le t_k} \left| D^{(l,0)} \kappa_2(s,t)\right| \bigg \}.
\end{equation*} 
For a positive integer $\alpha$, if $x \in C^{2r+\alpha}([0,1])$, we define
\begin{equation*}
	\norm{x}_{2r + \alpha, \infty} = \max_{0 \le i \le 2r +\alpha} \norm{x^{(i)}}_\infty = \max_{0 \le i \le 2r +\alpha} \left\{ \sup_{t \in [0,1]}  \left|x^{(i)}(t)\right| \right\}.
\end{equation*}   
where $x^{(i)}$ is the $i^{th}$ derivative of the function $x$, and
$$
\norm{x^{(i)}}_\infty =  \sup_{t \in [0,1]}  \left|x^{(i)}(t)\right|.
$$
In particular, 
$$
\norm{x}_\infty =  \sup_{t \in [0,1]}  \left|x(t)\right|.
$$
Now, for $x \in X$ and $s \in [0,1]$ we see that
$$
(Kx)(s) = \int_{0}^{1} \kappa(s, t) x(t)~dt= \int_{0}^{s} \kappa_1(s, t) x(t)~dt + \int_{s}^{1} \kappa_2(s, t) x(t)~dt.
$$
Using Leibnitz rule
\begin{align*}
	(Kx)'(s) &= \int_{0}^{s} \frac{\partial \kappa_1}{\partial s}(s, t) x(t)~dt + \int_{s}^{1} \frac{\partial \kappa_2}{\partial s}(s, t) x(t)~dt,
\end{align*}
which implies
$$
| (Kx)'(s) | \le \left( \sup_{0 \le t \le s \le 1} \left| D^{(1,0)} \kappa_1(s,t) \right|  \right) s \norm{x}_\infty +  \left( \sup_{0 \le s \le t \le 1} \left| D^{(1,0)} \kappa_2(s,t) \right|  \right) (1-s) \norm{x}_\infty. 
$$
Therefore, 
\begin{equation} \label{Eq: 3}
	\norm{(Kx)'}_\infty \le C_1 \norm{x}_\infty.
\end{equation}

%%%%%%%%%%%%%%%%%%%%%%%%%%%%%%%%%%%%%%%%%%%%%%%%%%%%%%%%%%%%%%%%%%%%%%%%%%%%%%%%%%%%%%%%%%%%%%%%%%%%%%%%%%%%%%%%%%%%%%%%%
\subsection{Interpolatory Projection}
Denote the uniform partition of $[0,1]$ as
$$
\Delta^{(n)} := \{ 0 =t_0 < t_1 < \cdots <t_n=1\},
$$ 
where $t_j = \displaystyle{\frac{j}{n}}, \quad j=1,2, \ldots, n$. Denote $\Delta_{j} = [t_{j-1}, t_j]$. Let $r$ be a non-negative integer, and define
$$
X_n = \left\{x \in L^\infty[0, 1] : x|_{\Delta_{j}} \text{ is a polynomial of degree } \leq 2r \right\}. 
$$
In each sub-interval $[t_{j-1}, t_j]$, consider $2r+1$ interpolation points as
$$
\tau_j^k = t_{j-1} + \frac{kh}{2r}, \quad k = 0, 1, 2, \ldots, 2r.
$$
That is, the distance between any two consecutive interpolation points is $\frac{h}{2r}$. Observe that the dimension of approximating space $X_n$ is $n(2r+1)$.\\
The interpolatory operator $P_n : X \to X_n$ is defined as:
\begin{equation} \label {Eq: 4}
	P_nx(\tau_j^k) = x(\tau_j^k), \quad k = 0, 1, 2, \ldots, 2r; \quad j = 1, 2, \ldots, n,
\end{equation}
Note that \[P_nx \to x \quad \text{for all} \; x \in C[0,1].\]
Using the Hahn-Banach extension theorem, $P_n$ can be extended to $L^\infty[0,1]$, and then $P_n : L^\infty[0,1] \to X_n$ is a projection.

Let $x \in C[0, 1]$ and $\left\{\tau_j^0, \tau_j^1, \ldots, \tau_j^{2r} \right\}$ be distinct points in $\Delta_{j}$. Denote the first order divided difference of $x$ at $\tau_j^0$ as $\left[\tau_j^0\right]x = x(\tau_j^0)$ and 
the $(2r+1)^{\text{th}}$ divided difference at $\tau_j^0, \tau_j^1, \ldots, \tau_j^{2r}$ as $\left[\tau_j^0, \tau_j^1, \ldots, \tau_j^{2r}\right]{x}$. Then
$$
\left[\tau_j^0, \tau_j^1, \ldots, \tau_j^{2r}\right] x= \frac{\left[\tau_j^1, \tau_j^2, \ldots, \tau_j^{2r}\right] x - \left[\tau_j^0, \tau_j^1, \ldots, \tau_j^{2r-1}\right] x}{\tau_j^{2r}- \tau_j^0},
$$ 
and so on. Let 
$$
\Psi_j(t) = \left(t - \tau_j^0 \right)\left(t - \tau_j^1 \right) \cdots \left(t - \tau_j^{2r} \right), \quad j =1,2, \ldots, n.
$$
Since the polynomial $P_{n,j}x$ interpolates $x$ at the points $\tau_j^0, \tau_j^1, \ldots, \tau_j^{2r}$,
$$
x(t) - P_{n,j}x(t) = \Psi_j(t) \left[\tau_j^0, \tau_j^1, \ldots, \tau_j^{2r}, t  \right]x , \quad t \in \Delta_j.
$$
If $x \in C^{2r+1}(\Delta_{j})$, then we have
\begin{equation} \label{Eq: 5}
	\norm{(I - P_{n,j})x}_\infty \le C_2 \norm{x^{(2r+1)}}_\infty h^{2r+1},
\end{equation}
where $x^{(2r+1)}$ denotes the $(2r+1)^{\text{th}}$ derivative of $x$ and $C_2$ is a constant independent of $h$.

%%%%%%%%%%%%%%%%%%%%%%%%%%%%%%%%%%%%%%%%%%%%%%%%%%%%%%%%%%%%%%%%%%%%%%%%%%%%%%%%%%%%%%%%%%%%%%%%%%%%%%%%%%%%%%%%%%%%%%%%%%%%
\subsection{Even degree polynomial interpolation}

The case of interpolatory projection at equidistant collocation points (not the Gauss points) for Green's type kernel onto a space of piecewise polynomials of degree $\leq 2r$ with respect to a uniform partition of $[0, 1]$ has not been considered in the research literature. We study the case here.

We first prove two important lemmas based on the divided difference of $Kx$ at $\tau_j^0, \tau_j^1, \ldots, \tau_j^{2r}$ and $s$, denoted by $\left[\tau_j^0, \tau_j^1, \ldots, \tau_j^{2r}, s  \right]Kx$.

%%%%%%%%%%%%%%%%%%%%%%%%%%%%%%%%%%%%%%%%%%%%%%%%%%%%%%%%%%%%%%%%%%%%%%%%%%%%%%%%%%%%%%%%%%%%%%%%%%%%%%%%%%%%%%%%%%%%%%%%%%%%
\begin{lemma} \label{le1}
	Let $r \ge 0$ and $\left\{\tau_j^0, \tau_j^1, \ldots, \tau_j^{2r} \right\}$ be the set of collocation points in $[t_{j-1}, t_j]$ for $j = 1, 2, \ldots, n$. Then
	\begin{eqnarray*}
		\sup_{s \in [t_{j-1}, t_j]}	\left|\left[\tau_j^0, \tau_j^1, \ldots, \tau_j^{2r}, s  \right]K x\right| \leq C_4 \norm{x}_\infty h^{-2r},
	\end{eqnarray*}
	for some constant $C_4$.
\end{lemma}

\begin{proof} Recall that
	$$
	\tau_j^i = t_{j-1} + \frac{ih}{2r} = t_{j-1}+h\zeta_i, ~ \text{ for } i = 0, 1, 2, \ldots, 2r; ~ j = 1, 2, \ldots, n,
	$$
	where $\zeta_{i} = \frac{i}{2r} \in [0, 1]$.
	Clearly $\tau_j^i \in [t_{j-1},t_j]$, we have
	\begin{equation*}
		(Kx)(s)=\int_0^1  \kappa(s,t) x(t) ~dt,
		~ \text{ with }~
		\kappa(s,t) = 
		\begin{cases}
			\kappa_1(s,t)    ~~~          & 0\leq t\leq s\leq 1,\\
			\kappa_2(s,t)    ~~~          & 0\leq s\leq t\leq 1.\\
		\end{cases} 
	\end{equation*} 
	Let $s \in [t_{j-1},t_j]$, then
	\begin{align} \label{Eq: 6}
		[\tau_j^0,\tau_j^1,s]Kx&=\int_0^1 [\tau_j^0,\tau_j^1,s] \kappa(.,t) x(t) ~dt \nonumber \\
		&= \underset{k \ne j}{\sum_{k=1}^{n}}~\int_{t_{k-1}}^{t_k} [\tau_j^0,\tau_j^1,s] \kappa(.,t) x(t) ~dt + \int_{t_{j-1}}^{t_j} [\tau_j^0,\tau_j^1,s] \kappa(.,t) x(t) ~dt. 
	\end{align}
	Since $\kappa$ is sufficiently differentiable in the interval $[t_{k-1},t_k]$ for $k \ne j$,
	\begin{equation}\label{Eq: 7}
		\left | ~\int_{t_{k-1}}^{t_k} [\tau_j^0,\tau_j^1,s] \kappa(.,t) x(t) ~dt \right | \le M_2 \norm{x}_\infty h.
	\end{equation}  
	For $\tau_j^0, \tau_j^1, s \in [t_{j-1},t_j]$, we need to find bound for $[\tau_j^0,\tau_j^1,s] \kappa(.,t)$. Fix $s \in [t_{j-1},{t_j}].$ \\
	\noindent
	\textbf{Case (I)}
	Whenever $t_{j-1} \le s < \tau_j^0$. Note that
	\begin{equation*}
		[\tau_j^0,s]\kappa(.,t) =\dfrac{\kappa(s,t)-k(\tau_j^0,t)}{s-\tau_j^0} ~=~
		\begin{cases}
			\dfrac{\kappa_1(s,t)-\kappa_1(\tau_j^0,t)}{s-\tau_j^0}  \quad &\text{if}\; t_{j-1} \le t \le s,\\
			\dfrac{\kappa_2(s,t)-\kappa_1(\tau_j^0,t)}{s-\tau_j^0}  \quad &\text{if}\; s < t < \tau_j^0,\\
			\dfrac{\kappa_2(s,t)-\kappa_2(\tau_j^0,t)}{s-\tau_j^0}  \quad &\text{if}\; \tau_j^0 \le t \le t_j.
		\end{cases}
	\end{equation*}	
	Thus, for a fixed $s \in [t_{j-1},\tau_j^0]$, the function $[\tau_j^0,s]\kappa(.,t)$ is continuous on $[t_{j-1},t_j].$
	Since $\kappa_1(.,t)$ and $\kappa_2(.,t)$ are continuous on $[s, \tau_j^0]$ and differentiable on $(s, \tau_j^0)$, by mean value theorem, we have
	$$
	\frac{\kappa_1(s,t)-\kappa_1(\tau_j^0,t)}{s-\tau_j^0} = D^{(1,0)}\kappa_1(\eta_1,t), \;\; \text{and} \;\; \frac{\kappa_2(s,t)-\kappa_2(\tau_j^0,t)}{s-\tau_j^0} = D^{(1,0)}\kappa_2(\eta_2,t),
	$$
	for some $\eta_1, \eta_2 \in (s, \tau_j^0).$
	Now, for $t_{j-1} \le t \le s$,
	$$
	\bigg | \frac{\kappa_1(s,t)-\kappa_1(\tau_j^0,t)}{s-\tau_j^0} \bigg| = \left| \ D^{(1,0)}\kappa_1(\eta_1,t)\right | \le \bigg (\sup_{t_{j-1} \le t \le s \le t_j} \left| D^{(1,0)} \kappa_1(s,t)\right| \bigg ) = M_1. 
	$$
	Similarly, for $\tau_j^0 \le t \le t_j$,
	$$
	\bigg | \frac{\kappa_2(s,t)-\kappa_2(\tau_j^0,t)}{s-\tau_j^0} \bigg| = \left| \ D^{(1,0)}\kappa_2(\eta_2,t)\right | \le \bigg (\sup_{t_{j-1} \le s \le t \le t_j} \left| D^{(1,0)} \kappa_2(s,t)\right| \bigg ) = M_1. 
	$$
	Hence, 
	\begin{equation}\label{Eq: 8}
		\left|[\tau_j^0,s]\kappa(.,t)\right| \le M_1, ~ \; \;\; \text{if} \; t_{j-1} \le t \le s \; \text{and} \; \tau_j^0 \le t \le t_j.
	\end{equation}
	Now, for $s < t < \tau_j^0$, first we find
	\begin{align*}
		\frac{\kappa_2(s,t)-\kappa_1(\tau_j^0,t)}{s-\tau_j^0} &= \frac{\kappa_2(s,t)-\kappa_2(t,t)+\kappa_1(t,t)-\kappa_1(\tau_j^0,t)}{s-\tau_j^0} \\
		&= \frac{D^{(1,0)}\kappa_2(\eta_3,t)(s-t)}{s-\tau_j^0}+\frac{D^{(1,0)}\kappa_1(\eta_4,t)(t-\tau_j^0)}{s-\tau_j^0},
	\end{align*}
	for some $\eta_3 \in (s,t)$ and $\eta_4 \in (t, \tau_j^0).$\\
	Hence, 
	\begin{align*}
		\bigg | \frac{\kappa_2(s,t)-\kappa_1(\tau_j^0,t)}{s-\tau_j^0} \bigg| &= \bigg| \frac{D^{(1,0)}\kappa_2(\eta_3,t)(s-t)}{s-\tau_j^0}+\frac{D^{(1,0)}\kappa_1(\eta_4,t)(t-\tau_j^0)}{s-\tau_j^0}\bigg| \nonumber \leq 2M_1.
	\end{align*}
	Therefore $\displaystyle{\left|[\tau_j^0,s]\kappa(.,t)\right| \le 2M_1}$ if $ s < t < \tau_j^0 $.
	Thus, by \eqref{Eq: 8} and the above inequality, we obtain
	\begin{equation}\label{Eq: 9}
		\sup_{t \in [t_{j-1},t_j]}\left|[\tau_j^0,s]\kappa(.,t)\right| \le 2M_1.
	\end{equation}
	
	\noindent	
	\textbf{Case (II)}
	Whenever $s = \tau_j^0$. In this case,
	\begin{align*}
		[\tau_j^0,\tau_j^0] \kappa(.,t) = \begin{cases}
			D^{(1,0)}\kappa_1(\tau_j^0,t)  \quad &\text{if}\; t_{j-1} \le t < \tau_j^0 < 1,\\
			D^{(1,0)}\kappa_2(\tau_j^0,t)  \quad &\text{if}\; t_{j-1} \le \tau_j^0 < t  < 1.\\	
		\end{cases} 
	\end{align*}
	Hence, 
	\begin{equation}\label{Eq: 10}
		\sup_{t \in [t_{j-1},t_j]}\left|[\tau_j^0,\tau_j^0]\kappa(.,t)\right| \le M_1.
	\end{equation}
	
	\noindent
	\textbf{Case (III)}
	Whenever $\tau_j^0<s \le t_j$. Proceeding the same way as in Case (I), we have
	\begin{equation*}
		\sup_{t \in [t_{j-1},t_j]}\left|[\tau_j^0,s]\kappa(.,t)\right| \le 2M_1.
	\end{equation*}
	Hence, by \eqref{Eq: 9}, \eqref{Eq: 10} and the above inequality, we obtain
	\begin{equation}\label{Eq: 11}
		\sup_{s,t \in [t_{j-1},t_j]}\left|[\tau_j^0,s]\kappa(.,t)\right| \le 2M_1.
	\end{equation}
	Note that
	$$
	[\tau_j^0,\tau_j^1,s]\kappa(.,t) =\dfrac{[\tau_j^0,s]\kappa(.,t)-[\tau_j^1,s]\kappa(.,t)}{\tau_j^0-\tau_j^1} 
	= \dfrac{1}{h(\zeta_0-\zeta_1)} \bigg [ [\tau_j^0,s]\kappa(.,t)-[\tau_j^1,s]\kappa(.,t) \bigg ],
	$$
	which implies
	$$
	\bigg |[\tau_j^0,\tau_j^1,s]\kappa(.,t) \bigg| \le \dfrac{1}{h |\zeta_0-\zeta_1|} \bigg [ \left |[\tau_j^0,s]\kappa(.,t)\right |+\left |[\tau_j^1,s]\kappa(.,t) \right | \bigg ].
	$$
	From \eqref{Eq: 11}, it follows that
	$$
	\sup_{s,t \in [t_{j-1},t_j]}\left|[\tau_j^0,\tau_j^1,s]\kappa(.,t)\right| \le \dfrac{4M_1}{h |\zeta_0-\zeta_1|}.
	$$
	Therefore, 
	$$
	\bigg |\int_{t_{j-1}}^{t_j} [\tau_j^0,\tau_j^1,s] \kappa(.,t) x(t) ~dt \bigg | 
	\le \dfrac{4M_1}{h |\zeta_0-\zeta_1|}\norm{x}_\infty =  \dfrac{C_3}{h}\norm{x}_\infty,
	$$
	where $C_3=\dfrac{4M_1}{|\zeta_0-\zeta_1|}$ is some constant.
	Hence, using \eqref{Eq: 7} and above inequality in \eqref{Eq: 6}, we obtain
	$$
	\left|[\tau_j^0,\tau_j^1,s]Kx \right| \le \bigg ( M_2 h + \dfrac{C_3}{h}\bigg)\norm{x}_\infty \le \dfrac{C_4}{h}\norm{x}_\infty,
	$$
	where $C_4$ is some constant. This gives the required estimate
	$$
	\left|[\tau_j^0,\tau_j^1,\ldots, \tau_j^{2r}, s]Kx \right| \le C_5 \norm{x}_\infty \dfrac{1}{h^{2r}},
	$$
	for some constant $C_5$. Since $s \in [t_{j-1},t_j]$ is arbitrary, the proof is complete.
\end{proof}
%%%%%%%%%%%%%%%%%%%%%%%%%%%%%%%%%%%%%%%%%%%%%%%%%%%%%%%%%%%%%%%%%%%%%%%%%%%%%%%%%%%%%%%%%%%%%%%%%%%%%%%%%%%%%%%%%%%

\begin{lemma} \label{le2}
	Let $r \ge 0$ and $\left\{\tau_j^0, \tau_j^1, \ldots, \tau_j^{2r} \right\}$ be the set of collocation points in $[t_{j-1}, t_j]$ for $j = 1, 2, \ldots, n$. Then
	\begin{eqnarray*}
		\sup_{s \in [t_{j-1}, t_j]}	\left|\left[\tau_j^0, \tau_j^1, \ldots, \tau_j^{2r}, s, s \right]K x\right| \leq C_8 \norm{x}_\infty h^{-2r},
	\end{eqnarray*}
	for some constant $C_8$.
\end{lemma}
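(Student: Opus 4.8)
The plan is to bypass divided differences of $\kappa(\cdot,t)$ — which, because of the repeated node $s$, need not stay $O(h^{-1})$ pointwise — and instead work directly with $Kx$, using that $Kx$ is genuinely twice continuously differentiable with $\norm{(Kx)''}_\infty$ controlled by $\norm{x}_\infty$. Applying the Leibniz rule to \eqref{Eq: 1}, the boundary terms of the first derivative cancel because $\kappa_1(s,s)=\kappa_2(s,s)$, recovering the formula used for \eqref{Eq: 3}; differentiating once more yields
\[
(Kx)''(s)=\bigl[D^{(1,0)}\kappa_1(s,s)-D^{(1,0)}\kappa_2(s,s)\bigr]x(s)+\int_{0}^{s}D^{(2,0)}\kappa_1(s,t)x(t)\,dt+\int_{s}^{1}D^{(2,0)}\kappa_2(s,t)x(t)\,dt,
\]
so that $Kx\in C^{2}[0,1]$ and $\norm{(Kx)''}_\infty\le 3C_1\norm{x}_\infty$. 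No derivative of $\kappa$ of order larger than two, and no derivative of $x$, enters, which keeps the argument within the hypothesis $\alpha>r$.

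Next I would settle the one-node case. Since $Kx\in C^{2}[0,1]$, the confluent second divided difference has the Taylor-remainder (Peano) form
\[
[\tau_j^{i},s,s]Kx=\frac{Kx(\tau_j^{i})-Kx(s)-(Kx)'(s)(\tau_j^{i}-s)}{(\tau_j^{i}-s)^2}=\frac{1}{(\tau_j^{i}-s)^2}\int_{s}^{\tau_j^{i}}(\tau_j^{i}-u)(Kx)''(u)\,du,
\]
hence $\bigl|[\tau_j^{i},s,s]Kx\bigr|\le\tfrac12\norm{(Kx)''}_\infty\le\tfrac32 C_1\norm{x}_\infty$, a bound independent of $h$ (and valid when $s=\tau_j^{i}$, the value then being $\tfrac12(Kx)''(s)$). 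For $r=0$ this is already the assertion, with $C_8=\tfrac32 C_1$.

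For $r\ge1$ I would induct on the number of collocation nodes present, keeping the double node $s$ in an interior slot. By the symmetry of divided differences, $\left[\tau_j^{0},\dots,\tau_j^{2r},s,s\right]Kx$ may be written with $s,s$ placed just after the first collocation node, and the divided-difference recursion applied with the two \emph{outermost} nodes (necessarily distinct collocation points) gives
\[
[\tau_j^{i_0},s,s,\tau_j^{i_1},\dots,\tau_j^{i_p}]Kx=\frac{[s,s,\tau_j^{i_1},\dots,\tau_j^{i_p}]Kx-[\tau_j^{i_0},s,s,\tau_j^{i_1},\dots,\tau_j^{i_{p-1}}]Kx}{\tau_j^{i_p}-\tau_j^{i_0}}.
\]
Since $\tau_j^{i}=t_{j-1}+h\zeta_i$ with $\zeta_i=i/(2r)$, the denominator satisfies $|\tau_j^{i_p}-\tau_j^{i_0}|=h\,|\zeta_{i_p}-\zeta_{i_0}|\ge h/(2r)$ for $i_0\ne i_p$, so each collocation node adjoined costs a factor $O(h^{-1})$; iterating from the one-node bound up to all $2r+1$ nodes $\tau_j^{0},\dots,\tau_j^{2r}$ gives $\bigl|[\tau_j^{0},\dots,\tau_j^{2r},s,s]Kx\bigr|\le C_8\norm{x}_\infty h^{-2r}$ uniformly in $s\in[t_{j-1},t_j]$ and $j$. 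The points needing care are: the continuity up to the endpoints and the $\norm{x}_\infty$-bound for $(Kx)''$, which follow from the two Leibniz steps above, the sole non-smooth contribution being the bounded jump term $[D^{(1,0)}\kappa_1(s,s)-D^{(1,0)}\kappa_2(s,s)]x(s)$; keeping $s$ out of the outer slots so that every denominator is a difference of two distinct collocation points (a fixed multiple of $h$) and no regularity of $\kappa$ beyond $C^{2}$ is invoked; and the harmless degeneracy $s=\tau_j^{i}$, which merely turns a double node into a triple one and is absorbed by continuity. (The same idea with $C^{1}$ in place of $C^{2}$ reproves the estimate of Lemma~\ref{le1} directly from \eqref{Eq: 3}.)
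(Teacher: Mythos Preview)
Your argument is correct and, for the base case, genuinely cleaner than the paper's. The paper establishes $\sup_{s}\lvert[\tau_j^{0},s,s]Kx\rvert\le C\norm{x}_\infty$ by a three-case analysis on the position of $s$ relative to $\tau_j^{0}$, bounding the pieces of $[\tau_j^{0},s,s]\kappa(\cdot,t)$ via the mean value theorem applied separately to $\kappa_1$ and $\kappa_2$ (and handling the mixed strip $s<t<\tau_j^{0}$ by splitting through the diagonal), before integrating against $x$. You instead observe once and for all that $Kx\in C^{2}[0,1]$ with $\norm{(Kx)''}_\infty\le 3C_1\norm{x}_\infty$, so the Peano form of the confluent second divided difference gives the base bound immediately; this replaces a page of casework by a two-line computation. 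The inductive step---rearranging so that the two outermost nodes in the recursion are distinct collocation points, hence a denominator of size $\ge h/(2r)$---is the same in both proofs (the paper writes only the first step explicitly and then invokes induction). Your remark that the degeneracy $s=\tau_j^{i}$ is harmless is justified: since $Kx\in C^{2}$, a triple node is admissible, and the inductive hypothesis, stated for \emph{any} $s\in[t_{j-1},t_j]$ including $s$ equal to one of the surviving collocation nodes, covers the numerator terms directly without appeal to continuity. What your route buys is brevity and a transparent explanation of why only second-order smoothness of $\kappa_i$ is needed; what the paper's kernel-level analysis buys is a template that could be reused if one wanted pointwise information on $[\tau_j^{0},s,s]\kappa(\cdot,t)$ itself rather than only on the integrated quantity.
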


\begin{proof}
	Let $s \in [t_{j-1},t_j]$, then
	\begin{align}\label{Eq: 12}
		[\tau_j^0,s,s]Kx&=\int_0^1 [\tau_j^0,s,s] \kappa(.,t) x(t) ~dt \nonumber \\
		&= \underset{k \ne j}{\sum_{k=1}^{n}}~\int_{t_{k-1}}^{t_k} [\tau_j^0,s,s] \kappa(.,t) x(t) ~dt + \int_{t_{j-1}}^{t_j} [\tau_j^0,s,s] \kappa(.,t) x(t) ~dt.
	\end{align}
	Since $\kappa$ is sufficiently differentiable in the interval $[t_{k-1},t_k]$ for $k \ne j$,
	\begin{equation}\label{Eq: 13}
		\left |~ \int_{t_{k-1}}^{t_k} [\tau_j^0,s,s] \kappa(.,t) x(t) dt \right | \le M_2 \norm{x}_\infty h.
	\end{equation} 
	For $\tau_j^0, s \in [t_{j-1},t_j]$, we need to find bound for $[\tau_j^0,s,s] \kappa(.,t)$.
	Fix $s \in [t_{j-1},{t_j}].$
	
	\noindent
	\textbf{Case (I)}
	Whenever $t_{j-1} \le s < \tau_j^0$. Note that
	$$
	[\tau_j^0,s,s]\kappa(.,t) =\dfrac{[s,s]\kappa(.,t)-[\tau_j^0,s]\kappa(.,t)}{s-\tau_j^0}.
	$$
	Thus,
	\begin{equation*}
		[\tau_j^0,s,s]\kappa(.,t) ~=~
		\begin{cases}
			\dfrac{\frac{\partial \kappa_1}{\partial s}(s,t)- \left ( \frac{\kappa_1(s,t)-\kappa_1(\tau_j^0,t)}{s-\tau_j^0}\right) }{s-\tau_j^0}  \quad &\text{if}\; t_{j-1} \le t \le s,\\
			\dfrac{\frac{\partial \kappa_2}{\partial s}(s,t)- \left ( \frac{\kappa_2(s,t)-\kappa_1(\tau_j^0,t)}{s-\tau_j^0}\right) }{s-\tau_j^0}  \quad &\text{if}\; s < t < \tau_j^0, \\
			\dfrac{\frac{\partial \kappa_2}{\partial s}(s,t)- \left ( \frac{\kappa_2(s,t)-\kappa_2(\tau_j^0,t)}{s-\tau_j^0}\right) }{s-\tau_j^0}  \quad &\text{if}\; \tau_j^0 \le t \le t_j.
		\end{cases}
	\end{equation*}
	This function $[\tau_j^0,s,s]\kappa(.,t)$ is possibly discontinuous at $t= s$.
	Since $\kappa_1(.,t)$ and $\kappa_2(.,t)$ are continuous on $[s, \tau_j^0]$ and differentiable on $(s, \tau_j^0)$, by mean value theorem, we obtain
	$$
	\frac{\kappa_1(s,t)-\kappa_1(\tau_j^0,t)}{s-\tau_j^0} = D^{(1,0)}\kappa_1(\nu_1,t), \;\; \text{and} \;\; \frac{\kappa_2(s,t)-\kappa_2(\tau_j^0,t)}{s-\tau_j^0} = D^{(1,0)}\kappa_2(\nu_2,t),
	$$
	for some $\nu_1, \nu_2 \in (s, \tau_j^0)$.	Now, for $t_{j-1} \le t \le s$,
	\begin{align*}
		\Bigg | \dfrac{\frac{\partial \kappa_1}{\partial s} (s,t)- \left ( \frac{\kappa_1(s,t)-\kappa_1(\tau_j^0,t)}{s-\tau_j^0}\right )}{s-\tau_j^0} \Bigg | &= \Bigg | \dfrac{ D^{(1,0)}\kappa_1(s,t)- D^{(1,0)}\kappa_1(\nu_1,t)}{s-\tau_j^0} \Bigg | \nonumber \\
		&= \Bigg | \dfrac{ D^{(2,0)}\kappa_1(\nu_3,t)(s-\nu_1)}{s-\tau_j^0} \Bigg |,~  \; \text{for some} \; \nu_3 \in (s, \nu_1) \nonumber \\
		&\le \bigg (\sup_{t_{j-1} \le t \le s \le t_j} \left| D^{(2,0)} \kappa_1(s,t)\right| \bigg ) = M_2. 
	\end{align*}
	Similarly, it can be shown that if $\tau_j^0 \le t \le t_j$, then
	$$
	\Bigg | \dfrac{\frac{\partial \kappa_2}{\partial s}(s,t) - \left ( \frac{\kappa_2(s,t)-\kappa_2(\tau_j^0,t)}{s-\tau_j^0}\right )}{s-\tau_j^0} \Bigg | \: \le \: \bigg (\sup_{t_{j-1} \le s \le t \le t_j} \left| D^{(2,0)} \kappa_2(s,t)\right| \bigg )= M_2.
	$$
	Hence, \begin{equation}\label{Eq: 14}
		\left|[\tau_j^0,s,s]\kappa(.,t)\right| \le M_2, ~ \; \;\; \text{if} \; t_{j-1} \le t \le s \; \text{and} \; \tau_j^0 \le t \le t_j.
	\end{equation}
	Now, for $s < t < \tau_j^0$, first we find
	\begin{align*}
		\frac{\kappa_2(s,t)-\kappa_1(\tau_j^0,t)}{s-\tau_j^0} &= \frac{\kappa_2(s,t)-\kappa_2(t,t)+\kappa_1(t,t)-\kappa_1(\tau_j^0,t)}{s-\tau_j^0} \\
		&= \frac{D^{(1,0)}\kappa_2(\nu_5,t)(s-t)}{s-\tau_j^0}+\frac{D^{(1,0)}\kappa_1(\nu_6,t)(t-\tau_j^0)}{s-\tau_j^0},
	\end{align*}
	for some $\nu_5 \in (s,t)$ and $\nu_6 \in (t, \tau_j^0)$.
	Then, 
	\begin{multline*}
		\Bigg | \frac{\partial \kappa_2}{\partial s}(s,t)- \left ( \frac{\kappa_2(s,t)-\kappa_1(\tau_j^0,t)}{s-\tau_j^0}\right ) \Bigg | \\ = \Bigg | D^{(1,0)}\kappa_2(s,t)- \frac{D^{(1,0)}\kappa_2(\nu_5,t)(s-t)}{s-\tau_j^0}-\frac{D^{(1,0)}\kappa_1(\nu_6,t)(t-\tau_j^0)}{s-\tau_j^0}\Bigg |,
	\end{multline*}
	which gives
	$$
	\Bigg | \frac{\partial \kappa_2}{\partial s}(s,t)- \left ( \frac{\kappa_2(s,t)-\kappa_1(\tau_j^0,t)}{s-\tau_j^0}\right ) \Bigg | \le M_1 +M_1+M_1=3M_1.
	$$
	Hence,
	$$
	\bigintsss_{s}^{\tau_j^0}\Bigg | \dfrac{\frac{\partial \kappa_2}{\partial s}(s,t)- \left ( \frac{\kappa_2(s,t)-\kappa_1(\tau_j^0,t)}{s-\tau_j^0}\right) }{s-\tau_j^0} \Bigg |~dt \le 3M_1.
	$$
	Using \eqref{Eq: 14} and the above inequality, we obtain
	$$
	\bigg |\int_{t_{j-1}}^{t_j} [\tau_j^0,s,s] \kappa(.,t) x(t) ~dt \bigg | 
	\le \left ( M_2 +3 M_1 + M_2 \right )\norm{x}_\infty = \left ( 3 M_1  + 2 M_2 \right )\norm{x}_\infty.
	$$ 
	Therefore, using \eqref{Eq: 13} and the above inequality in \eqref{Eq: 12} we have
	\begin{equation}\label{Eq: 15}
		\left |[\tau_j^0,s,s]Kx \right| \le \left ( M_2 h +3 M_1  + 2 M_2 \right )\norm{x}_\infty \le C_5 \norm{x}_\infty,
	\end{equation}
	where $C_5$ is some constant.
	
	\noindent
	\textbf{Case (II)}
	Whenever $s = \tau_j^0$.
	\begin{align*}
		[\tau_j^0,\tau_j^0,\tau_j^0]Kx&= \frac{1}{2} (Kx)^{''}(\tau_j^0)\\
		&= \frac{1}{2}\left ( \frac{\partial \kappa_1}{\partial s}(\tau_j^0,\tau_j^0)-\frac{\partial \kappa_2}{\partial s}(\tau_j^0,\tau_j^0)\right )x(\tau_j^0)\\&+\frac{1}{2}\left [~ \int_{t_j-1}^{\tau_j^0}\frac{\partial^2 \kappa_1}{\partial^2 s}(\tau_j^0,t)x(t)~dt+\int_{\tau_j^0}^{t_j}\frac{\partial^2 \kappa_2}{\partial^2 s}(\tau_j^0,t)x(t)~dt\right ]. 
	\end{align*}
	Hence, 
	\begin{equation}\label{Eq: 16}
		\left |[\tau_j^0,\tau_j^0,\tau_j^0]Kx\right| \le \left ( M_1+ M_2\right )\norm{x}_\infty. 
	\end{equation}

	\noindent
	\textbf{Case (III)}
	Whenever $\tau_j^0<s \le t_j$. Similar in Case (I), we have
	$$
	\left |[\tau_j^0,s,s]Kx \right| \le \left ( M_2 h +3 M_1  + 2 M_2 \right )\norm{x}_\infty \le C_6 \norm{x}_\infty, 
	$$
	where $C_6$ is some constant. From \eqref{Eq: 15}, \eqref{Eq: 16} and above inequality, it follows that
	\begin{equation}\label{Eq: 17}
		\sup_{s \in [t_{j-1},t_j]} \left|[\tau_j^0,s,s]Kx \right| \le C_6 \norm{x}_\infty,
	\end{equation}
	which proves the required estimate for the case $r = 0$. Now, for $r=1$,
	$$
	[\tau_j^0,\tau_j^1,s,s]Kx  = \dfrac{[\tau_j^0,s,s]Kx-[\tau_j^1,s,s]Kx}{\tau_j^0-\tau_j^1} = \frac{1}{h (\zeta_0-\zeta_1) } \bigg [[\tau_j^0,s,s]Kx-[\tau_j^1,s,s]Kx \bigg],
	$$
	which implies
	$$
	\left |[\tau_j^0,\tau_j^1,s,s]Kx \right | 	\le \frac{1}{h | \zeta_0-\zeta_1 |} \bigg [\left|[\tau_j^0,s,s]Kx\right|+\left|[\tau_j^1,s,s]Kx\right| \bigg].
	$$
	Hence, from \eqref{Eq: 17}, we obtain
	$$
	\sup_{s \in [t_{j-1},t_j]} \left|[\tau_j^0,\tau_j^1,s,s]Kx \right| \le \frac{2C_6}{h | \zeta_0-\zeta_1 |} \norm{x}_\infty = C_7 \norm{x}_\infty \frac{1}{h},
	$$
	where $C_7= \frac{2C_6}{| \zeta_0-\zeta_1 |}$ is some constant.
	By mathematical induction, it follows that
	$$
	\sup_{s \in [t_{j-1},t_j]} \left|[\tau_j^0,\tau_j^1,\ldots, \tau_j^{2r},s,s]Kx\right| \le C_8 \norm{x}_\infty \frac{1}{h^{2r}} \;,
	$$
	where $C_8$ is some constant.
\end{proof}
%%%%%%%%%%%%%%%%%%%%%%%%%%%%%%%%%%%%%%%%%%%%%%%%%%%%%%%%%%%%%%%%%%%%%%%%%%%%%%%%%%%%%%%%%%%%%%%%%%%%%%%%%%%%%%%%%%%%%%%%%%%%

\section{Iteration methods of approximation}
Recall that $K$ is a Fredholm integral operator with Green's function type kernel and $P_n$ is the interpolatory operator at $2r+1$ interpolating points defined by \eqref{Eq: 4}. In the collocation method, the main equation \eqref{Eq: 2} is approximated by
\begin{equation} \label{Eq: 18}
	\phi_{n}^C - P_nK\phi_{n}^C = P_nf.
\end{equation}
Sloan \cite{Sl1} introduced iterated collocation solution by taking one step iteration as
\begin{equation} \label{Eq: 19}
	\phi_{n}^S = K\phi_{n}^C + f.
\end{equation}
The following modified collocation method has been introduced by Kulkarni \cite{RPK1}:
\begin{equation} \label{Eq: 20}
	\phi_{n}^M - K_{n}^M\phi_{n}^M = f,
\end{equation}
where
\[	K_n^M = P_nK + KP_n - P_nKP_n.\]
The one step iteration is defined as
\begin{equation} \label{Eq: 21}
	\tilde{\phi}_n^M=K\phi_n^M+f,
\end{equation}
which gives the solution for iterated modified collocation method.

Now, we establish preliminary results below using the standard technique along with the aforementioned lemmas, and these outcomes will be useful for the main result.
%%%%%%%%%%%%%%%%%%%%%%%%%%%%%%%%%%%%%%%%%%%%%%%%%%%%%%%%%%%%%%%%%%%%%%%%%%%%%%%%%%%%%%%%%%%%%%%%%%%%%%%%%%%%%%%%%%%%%
\begin{proposition} \label{p1}
	If $x \in C^{2r+2}([0,1])$, then
	\begin{eqnarray*}
		\norm{K(I-P_n)x }_\infty \le 2C_1 \norm{x}_{2r+2, \infty} h^{2r + 2},
	\end{eqnarray*}
	where $C_1$ is a constant independent of $h$.
\end{proposition}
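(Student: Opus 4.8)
The plan is to estimate $(K(I-P_n)x)(s)$ for each fixed $s\in[0,1]$ by splitting $K(I-P_n)x$ over the subintervals of the partition and exploiting a moment (symmetry) property of the collocation nodes. Setting $y=(I-P_n)x$, we have $(Ky)(s)=\sum_{j=1}^{n}\int_{\Delta_j}\kappa(s,t)\,y(t)\,dt$, and on each $\Delta_j$ the interpolation error formula gives $y(t)=\Psi_j(t)\,d_j(t)$, where $d_j(t)=[\tau_j^0,\tau_j^1,\ldots,\tau_j^{2r},t]x$. Since $x\in C^{2r+2}([0,1])$, the mean value form of the divided difference gives $|d_j(t)|\le \norm{x^{(2r+1)}}_\infty/(2r+1)!$, and differentiating with respect to the repeated node yields $d_j'(t)=[\tau_j^0,\ldots,\tau_j^{2r},t,t]x$, so $|d_j'(t)|\le \norm{x^{(2r+2)}}_\infty/(2r+2)!$; in particular $|d_j(t)|,|d_j'(t)|\le \norm{x}_{2r+2,\infty}$ on $\Delta_j$.

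The structural fact that drives the gain of one power of $h$ is that $\int_{\Delta_j}\Psi_j(t)\,dt=0$. The $2r+1$ collocation points $\tau_j^k=t_{j-1}+kh/(2r)$ are symmetric about the midpoint $\tau_j^r=t_{j-1}+h/2$ of $\Delta_j$, so with the substitution $t=\tau_j^r+u$ the polynomial $\Psi_j$ becomes the odd function $u\prod_{m=1}^{r}\bigl(u^2-m^2h^2/(2r)^2\bigr)$, whose integral over the symmetric interval $[-h/2,h/2]$ vanishes (the case $r=0$ is the midpoint rule). I will also use the crude size bounds $\sup_{\Delta_j}|\Psi_j|\le h^{2r+1}$ and $\int_{\Delta_j}|\Psi_j(t)|\,dt\le h^{2r+2}$.

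Next I would separate the at most two ``diagonal'' subintervals $\Delta_{j_0}$ that contain $s$ from the others. On a diagonal block $\kappa(s,\cdot)$ has a corner at $t=s$, but $|\kappa(s,t)|\le C_1$ there, so the trivial estimate gives $\bigl|\int_{\Delta_{j_0}}\kappa(s,t)\Psi_{j_0}(t)d_{j_0}(t)\,dt\bigr|\le C_1\,h^{2r+1}\norm{x}_{2r+2,\infty}\,h$, and there are only $O(1)$ such blocks. For $j$ with $s\notin\Delta_j$ the map $t\mapsto\kappa(s,t)$ coincides with $\kappa_1(s,\cdot)$ or $\kappa_2(s,\cdot)$ throughout $\Delta_j$, so $G_j:=\kappa(s,\cdot)\,d_j$ is $C^1$ on $\Delta_j$ with $\norm{G_j}_\infty,\norm{G_j'}_\infty\lesssim C_1\norm{x}_{2r+2,\infty}$ (using $|D^{(0,1)}\kappa_i|\le C_1$). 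Since $\int_{\Delta_j}\Psi_j=0$,
$$\int_{\Delta_j}\kappa(s,t)\Psi_j(t)d_j(t)\,dt=\int_{\Delta_j}\bigl(G_j(t)-G_j(\tau_j^r)\bigr)\Psi_j(t)\,dt,$$
whose modulus is $\le\norm{G_j'}_\infty\tfrac{h}{2}\int_{\Delta_j}|\Psi_j(t)|\,dt\lesssim C_1\norm{x}_{2r+2,\infty}h^{2r+3}$ (equivalently, integrate by parts: the primitive of $\Psi_j$ vanishes at both endpoints of $\Delta_j$). Summing over the $n=1/h$ off-diagonal subintervals gives $O(h^{2r+2})$, and adding the $O(1)$ diagonal contributions gives a bound of the form $c\,C_1\norm{x}_{2r+2,\infty}h^{2r+2}$, uniformly in $s$.

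The main obstacle is precisely the non-smoothness of the Green's kernel on the diagonal: the cancellation $\int_{\Delta_j}\Psi_j=0$ only buys the extra power of $h$ where $\kappa(s,\cdot)$ is $C^1$ on $\Delta_j$, so one must argue that the exceptional blocks are finite in number and already contribute only $O(h^{2r+2})$ on their own. A secondary, purely computational point is to pin down the constant as $2C_1$: the worst-case bounds above overshoot slightly, but using the sharp values $\int_{\Delta_j}|\Psi_j|=c_r h^{2r+2}$ with $c_r<1$ for equidistant nodes, together with the factorials $(2r+1)!$ and $(2r+2)!$ in the divided-difference estimates and $|t-\tau_j^r|\le h/2$, brings the total below $2C_1\norm{x}_{2r+2,\infty}h^{2r+2}$.
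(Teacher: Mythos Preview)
Your proof is correct and follows essentially the same line as the paper's: decompose $K(I-P_n)x(s)$ over the subintervals, use the key identity $\int_{\Delta_j}\Psi_j=0$ to gain one power of $h$ on the off-diagonal blocks where $\kappa(s,\cdot)d_j(\cdot)$ is $C^1$, and handle the diagonal block containing $s$ separately. One minor difference worth noting: on the diagonal block the paper also applies the mean-value cancellation (tacitly assuming $f_i^s$ is differentiable on $(t_{i-1},t_i)$, which is not quite true at $t=s$ because of the kernel corner, though Lipschitz continuity suffices), whereas your trivial $O(h^{2r+2})$ bound there is cleaner and sidesteps that subtlety.
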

\begin{proof} Let $s \in [0,1]$. Then we have
	\begin{align*}
		K(I-P_n)x(s) &=\int_0^1  \kappa(s,t) (I-P_n) x(t) ~dt \notag \\
		&= \sum_{j=1}^{n} \int_{t_{j-1}}^{t_j}  \kappa(s,t) (I-P_{n,j})x(t) ~dt  \notag \\
		&= \sum_{j=1}^{n ~}\int_{t_{j-1}}^{t_j}  \kappa(s,t) \Psi_j(t)  \left[\tau_j^0,\tau_j^1,\ldots, \tau_j^{2r},t\right]x  ~dt,
	\end{align*}
	where
	$$
	\Psi_j(t) = \prod_{i=0}^{2r} (t- \tau_j^i), \quad \text{for} ~ j=1,2,\ldots,n.
	$$ 
	Let $s \in [t_{i-1},t_i] \subset [0,1]$, for some $1 \le i \le n$. Then we write
	\begin{align} \label{Eq: 22}
		K(I-P_n)x(s) &= \sum_{\substack{j=1 \\  j\ne i} }^{n} ~	\int_{t_{j-1}}^{t_j}  \kappa(s,t) \Psi_j(t) \left[\tau_j^0,\tau_j^1,\ldots, \tau_j^{2r},t\right]x ~dt \notag \\
		& \quad + \int_{t_{i-1}}^{t_i}  \kappa(s,t) \Psi_i(t) \left[\tau_i^0,\tau_i^1,\ldots, \tau_i^{2r},t\right]x ~dt 		
	\end{align}
	For fix $s$, let
	\begin{align*}
		f_j^s(t) =  \kappa(s,t) \left[\tau_j^0,\tau_j^1,\ldots, \tau_j^{2r},t\right]x, \quad t \in [t_{j-1}, t_j], ~ \text{for} ~ j=1,2,\ldots,n.
	\end{align*}
	That is
	\begin{equation*}
		f_j^s(t) ~= ~
		\begin{cases}
			\kappa_1(s,t) \left[\tau_j^0,\tau_j^1,\ldots, \tau_j^{2r},t\right]x  ~~~         &  j \le i,\\
			\kappa_2(s,t) \left[\tau_j^0,\tau_j^1,\ldots, \tau_j^{2r},t\right]x ~~~          &  j \ge i.\\
		\end{cases} 
	\end{equation*} 
	Note that the function $f_j^s$ is continuous on $[t_{j-1}, t_j]$ and differentiable on $(t_{j-1}, t_j)$ for $j \ne i$. Therefore, by Taylor series expansion of $f_j^s$ about the point $\frac{t_{j-1}+t_j}{2}$, there exist a point $\zeta_{j1} \in (t_{j-1}, t_j)$ such that
	$$
	(f_j^s)(t) = f_j^s \left(\frac{t_{j-1}+t_j}{2}\right) + \left(t - \frac{t_{j-1}+t_j}{2}\right) (f_j^s)'(\zeta_{j1}),
	$$ 
	where
	\begin{equation*}
	(f_j^s)'(\zeta_{j1}) ~= ~
	\begin{cases}
	\kappa_1(s,\zeta_{j1}) \left[\tau_j^0,\tau_j^1,\ldots, \tau_j^{2r},\zeta_{j1}, \zeta_{j1} \right]x \\
	+ D^{(0,1)} \kappa_1(s,\zeta_{j1}) \left[\tau_j^0,\tau_j^1,\ldots, \tau_j^{2r},\zeta_{j1} \right]x~~~          &  j \le i,\vspace*{1.5mm}\\ 
		\kappa_2(s,\zeta_{j1}) \left[\tau_j^0,\tau_j^1,\ldots, \tau_j^{2r},\zeta_{j1}, \zeta_{j1} \right]x \\
	+  D^{(0,1)} \kappa_2(s,\zeta_{j1}) \left[\tau_j^0,\tau_j^1,\ldots, \tau_j^{2r},\zeta_{j1} \right]x~~~          & j \ge i.\\
	\end{cases}
	\end{equation*}
	Since $x \in C^{2r+2}([0,1])$ , then recall that
	$$
	\norm{x}_{2r + 2, \infty} = \max_{0 \le i \le 2r +2} \norm{x^{(i)}}_\infty = \max_{0 \le i \le 2r +2} \left\{  \sup_{t \in [0,1]} \left|x^{(i)}(t)\right| \right\}.
	$$   
	Thus 
	$$
	\left|(f_j^s)'(\zeta_{j1}) \right| \le 2 C_1 \norm{x}_{2r +2, \infty}.
	$$
	Now, we have
	$$
	\int_{t_{j-1}}^{t_j}  \kappa(s,t) \Psi_j(t) \left[\tau_j^0,\tau_j^1,\ldots, \tau_j^{2r},t\right]x ~dt = \int_{t_{j-1}}^{t_j}  \left(t - \frac{t_{j-1}+t_j}{2}\right) (f_j^s)'(\zeta_{j1}) \Psi_j(t) ~dt.
	$$
	It follows that 
	$$
	\left|\int_{t_{j-1}}^{t_j}  \kappa(s,t) \Psi_j(t) \left[\tau_j^0,\tau_j^1,\ldots, \tau_j^{2r},t\right]x ~dt \right| \le  2 C_1 \norm{x}_{2r +2, \infty} h^{2r + 3}.
	$$
	Since $\displaystyle{\int_{t_{i-1}}^{t_i} \Psi_i(t) ~dt =0}$,
	\begin{align*}
		\int_{t_{i-1}}^{t_i}  \kappa(s,t) \Psi_i(t) \left[\tau_i^0,\tau_i^1,\ldots, \tau_i^{2r},t\right]x ~dt &= \int_{t_{i-1}}^{t_i}   (f_i^s)(t) \Psi_i(t) ~dt, \quad s \in [t_{i-1},t_i] \\
		&= \int_{t_{i-1}}^{t_i}   [(f_i^s)(t)-(f_i^s)(s)] \Psi_i(t) ~dt.
	\end{align*}
	Since the function $f_i^s$ is continuous on $[t_{i-1}, t_i]$ and differentiable on $(t_{i-1}, t_i)$, by mean value theorem we have 
	\begin{equation*}
		(f_i^s)(t)-(f_i^s)(s) = (t-s)(f_j^s)'(\zeta_{j2}),
	\end{equation*}
	for some $\zeta_{j2} \in (t_{i-1}, t_i)$. Then
	\begin{equation*}
		\left|~\int_{t_{i-1}}^{t_i}  \kappa(s,t) \Psi_i(t) \left[\tau_i^0,\tau_i^1,\ldots, \tau_i^{2r},t\right]x ~dt \right| \le  2 C_1 \norm{x}_{2r +2, \infty} h^{2r + 3}.
	\end{equation*}
	By \eqref{Eq: 22},
	\begin{align*}
		\left|K(I-P_n)x(s)\right| &\le \sum_{\substack{j=1 \\  j\ne i} }^{n} ~	\left|~\int_{t_{j-1}}^{t_j}  \kappa(s,t) \left( [\tau_j^0,\tau_j^1,\ldots, \tau_j^{2r},t]x \right) \Psi_j(t) ~dt \right|\\
		& \quad + \left|~\int_{t_{i-1}}^{t_i}  \kappa(s,t) \left( [\tau_i^0,\tau_i^1,\ldots, \tau_i^{2r},t]x \right) \Psi_i(t) ~dt 	\right| .
	\end{align*}
	Hence
	\begin{equation*}
		\norm{K(I-P_n)x}_\infty \le 2 C_1 \norm{x}_{2r +2, \infty} h^{2r + 2},
	\end{equation*}
	which proves the result.
\end{proof}
%%%%%%%%%%%%%%%%%%%%%%%%%%%%%%%%%%%%%%%%%%%%%%%%%%%%%%%%%%%%%%%%%%%%%%%%%%%%%%%%%%%%%%%%%%%%%%%%%%%%%%%%%%%%%%%%%%%%%%
\begin{proposition} \label{p2}
	If $x \in C^{2r+2}([0,1])$, then
	\begin{equation*} 	
		\norm{(I-P_n)K(I-P_n)x}_\infty =  
		\begin{cases}
			O(h^{2r+2})~~~          &  r \ge 1,\\
			O(h^3)~~~          & r=0.\\
		\end{cases} 
	\end{equation*}
\end{proposition}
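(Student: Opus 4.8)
The plan is to split according to $r$. For $r\ge1$ the interpolation remainder formula together with Lemma~\ref{le1} and \eqref{Eq: 5} already yields the stated order; for $r=0$ one extra factor of $h$ must be extracted from a cancellation, and it is this case that is the real content.

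\emph{Case $r\ge1$.} I would put $w=(I-P_n)x$, so that $K(I-P_n)x=Kw$, and fix $i$ and $s\in\Delta_i$. Since $P_{n,i}$ interpolates at $\tau_i^0,\ldots,\tau_i^{2r}$, the remainder formula recalled just before \eqref{Eq: 5}, applied to $Kw$, gives
\[
	(I-P_n)K(I-P_n)x(s)=\big(Kw-P_{n,i}(Kw)\big)(s)=\Psi_i(s)\,\big[\tau_i^0,\ldots,\tau_i^{2r},s\big]Kw .
\]
Each of the $2r+1$ linear factors of $\Psi_i(s)$ has modulus at most $h$ on $\Delta_i$, so $|\Psi_i(s)|\le h^{2r+1}$. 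By Lemma~\ref{le1} — whose proof uses only the sup-norm of its argument, hence applies to $w\in L^\infty[0,1]$ — the divided difference is at most $C_4\norm{w}_\infty h^{-2r}$ in modulus; and since $x\in C^{2r+2}\subset C^{2r+1}$, \eqref{Eq: 5} gives $\norm{w}_\infty=\norm{(I-P_n)x}_\infty\le C_2\norm{x^{(2r+1)}}_\infty h^{2r+1}$. Multiplying the three estimates yields $\norm{(I-P_n)K(I-P_n)x}_\infty\le C_2C_4\norm{x^{(2r+1)}}_\infty h^{2r+2}$, which is the assertion for $r\ge1$. (The same chain of inequalities gives only $O(h^2)$ when $r=0$, which is why that case needs the sharper argument below.)

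\emph{Case $r=0$.} Here $X_n$ consists of piecewise constants and $\tau_j^0$ is the midpoint of $\Delta_j$, so $\Psi_j(t)=t-\tau_j^0$ and $\int_{\Delta_j}\Psi_j(t)\,dt=0$. With $w=(I-P_n)x$ and $s\in\Delta_i$, the constant $P_{n,i}(Kw)$ equals $Kw(\tau_i^0)$, hence
\[
	(I-P_n)K(I-P_n)x(s)=Kw(s)-Kw(\tau_i^0)=\sum_{j=1}^{n}\int_{\Delta_j}\big(\kappa(s,t)-\kappa(\tau_i^0,t)\big)w(t)\,dt .
\]
For $j\ne i$ the variable $t\in\Delta_j$ stays on one fixed side of both $s$ and $\tau_i^0$, so $\kappa(s,t)-\kappa(\tau_i^0,t)=\kappa_l(s,t)-\kappa_l(\tau_i^0,t)$ for a fixed $l\in\{1,2\}$; by the mean value theorem in the first variable and the bounds on $D^{(1,0)}\kappa_l$ and $D^{(1,1)}\kappa_l$ contained in $C_1$, this difference and its $t$-derivative are both $O(h)$ because $|s-\tau_i^0|\le h$. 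Consequently $G_j^s(t):=\big(\kappa_l(s,t)-\kappa_l(\tau_i^0,t)\big)[\tau_j^0,t]x$ is $C^1$ on $\Delta_j$ with $\sup_{t\in\Delta_j}|(G_j^s)'(t)|=O(h)$ (here $x\in C^2$ is used), and since $w(t)=\Psi_j(t)[\tau_j^0,t]x$ for $t\in\Delta_j$ while $\int_{\Delta_j}\Psi_j=0$,
\[
	\int_{\Delta_j}\big(\kappa(s,t)-\kappa(\tau_i^0,t)\big)w(t)\,dt=\int_{\Delta_j}\Psi_j(t)\big(G_j^s(t)-G_j^s(\tau_j^0)\big)\,dt ,
\]
whose modulus is $O(h^4)$ because $|\Psi_j|\le h$, $|G_j^s(t)-G_j^s(\tau_j^0)|\le h\,\sup_{t\in\Delta_j}|(G_j^s)'(t)|=O(h^2)$, and $|\Delta_j|=h$. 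Summing over the at most $n=1/h$ indices $j\ne i$ contributes $O(h^3)$. For $j=i$ I would estimate directly: $|\kappa(s,t)-\kappa(\tau_i^0,t)|\le 2M_1h$ for all $s,t,\tau_i^0\in\Delta_i$ (the horizontal difference quotient of $\kappa$ over a subinterval is bounded; cf.\ \eqref{Eq: 11}), and $\norm{w}_\infty\le C_2\norm{x'}_\infty h$ on $\Delta_i$ by \eqref{Eq: 5}, so this term is $O(h^3)$ too. Adding the two contributions yields $\norm{(I-P_n)K(I-P_n)x}_\infty=O(h^3)$.

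The hard part is the case $r=0$: the estimate sufficient for $r\ge1$ delivers only $O(h^2)$, and the extra power of $h$ rests on a combination that is genuinely needed — the identity $\int_{\Delta_j}\Psi_j=0$ (which, for $r=0$, holds precisely because the single collocation point is taken to be the midpoint, consistently with the symmetric arrangement used for $r\ge1$) together with the smallness $\kappa(s,t)-\kappa(\tau_i^0,t)=O(h)$ that comes from $s$ and $\tau_i^0$ lying in the same subinterval. A little extra care is required on the diagonal subinterval $\Delta_i$, where the corner of the kernel along $t=s$ precludes the Taylor-type argument used for $j\ne i$ and only a crude size bound is available; that bound is nonetheless $O(h^3)$, since on $\Delta_i$ both $\kappa(s,t)-\kappa(\tau_i^0,t)$ and $(I-P_n)x$ are of order $h$.
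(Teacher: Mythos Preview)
Your argument is correct in both cases, but it differs from the paper's proof in a way worth noting.

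For $r\ge1$ the paper does not invoke the remainder formula or Lemma~\ref{le1} at all: it simply bounds $\norm{(I-P_n)K(I-P_n)x}_\infty\le\norm{I-P_n}\,\norm{K(I-P_n)x}_\infty$, uses the uniform boundedness of $P_n$, and reads off $O(h^{2r+2})$ from Proposition~\ref{p1}. This is shorter, though it presupposes Proposition~\ref{p1} and the full regularity $x\in C^{2r+2}$ that proposition needs, whereas your route via $\Psi_i(s)\,[\tau_i^0,\ldots,\tau_i^{2r},s]Kw$ and Lemma~\ref{le1} is self-contained and only uses $x\in C^{2r+1}$.

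For $r=0$ the contrast is sharper. The paper writes $\norm{(I-P_n)K(I-P_n)x}_\infty\le C_2\norm{(K(I-P_n)x)'}_\infty\,h$ from \eqref{Eq: 5} and then \emph{cites} Chatelin--Lebbar for $\norm{(K(I-P_n)x)'}_\infty=O(h^2)$; no argument is given in the paper itself. Your direct computation---splitting $Kw(s)-Kw(\tau_i^0)$ over subintervals, using $\int_{\Delta_j}\Psi_j=0$ and the $O(h)$ smallness of $\kappa(s,t)-\kappa(\tau_i^0,t)$ for the off-diagonal pieces, and a crude $O(h^3)$ bound on the diagonal block via \eqref{Eq: 11}---supplies exactly what the citation covers. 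Your approach is therefore more self-contained; the paper's is terser but leans on an external reference for the substantive step.
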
	

\begin{proof}
	For $r \ge 1$, whenever $x \in C^{2r+2}([0,1])$, note that
	$$
	\norm{(I-P_n)K(I-P_n)x}_\infty \le \norm{I-P_n} \norm{K(I-P_n)x}_\infty.
	$$
	Since $\norm{P_n}$ is uniformly bounded that means $\norm{P_n} \le p$, for $p > 0$ and by Proposition \eqref{p1}, we have
	$$
	\norm{(I-P_n)K(I-P_n)x}_\infty =O(h^{2r+2}).
	$$
	If $r=0$, using \eqref{Eq: 5} we write 
	$$
	\norm{(I-P_n)K(I-P_n)x}_\infty \le C_2 \norm{(K(I-P_n)x)'}_\infty h.
	$$
	From Chatelin-Lebbar \cite[Theorem 15]{Cha-Leb2}, we have 
	$$
	\norm{(K(I-P_n)x)'}_\infty = O(h^2).
	$$
	Combining the above two estimates, we obtain
	$$
	\norm{(I-P_n)K(I-P_n)x}_\infty = O(h^3).
	$$
\end{proof}
%%%%%%%%%%%%%%%%%%%%%%%%%%%%%%%%%%%%%%%%%%%%%%%%%%%%%%%%%%%%%%%%%%%%%%%%%%%%%%%%%%%%%%%%%%%%%%%%%%%%%%%%%%%%%%%%%%%%%%%%%	
\begin{proposition} \label{p3}
	If $x \in C^{2r+1}([0,1]),$ then
	\begin{equation*}
		\norm{K(I-P_n)K(I-P_n)x }_\infty = \begin{cases}
			O(h^{2r+3})~~~          &  r \ge 1,\\
			O(h^4)~~~          & r=0.\\
		\end{cases}
	\end{equation*}
\end{proposition}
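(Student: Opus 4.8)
The idea is to factor $K(I-P_n)K(I-P_n)x = K(I-P_n)(Kv)$ with $v := (I-P_n)x$; since $x \in C^{2r+1}([0,1])$, estimate \eqref{Eq: 5} gives $\norm{v}_\infty = O(h^{2r+1})$. For $s \in [t_{i-1},t_i]$, expanding exactly as in the proof of Proposition \ref{p1},
\begin{equation*}
	K(I-P_n)(Kv)(s) = \sum_{j=1}^{n} \int_{t_{j-1}}^{t_j} \kappa(s,t)\,\Psi_j(t)\,[\tau_j^0,\ldots,\tau_j^{2r},t]Kv\; dt ,
\end{equation*}
so the whole estimate reduces to controlling the divided differences of $Kv$ on each $\Delta_j$ — which is precisely what Lemmas \ref{le1} and \ref{le2} do once $Kv$ is fed to them in place of $x$.

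\emph{The case $r \ge 1$.} Lemma \ref{le1} gives $\sup_{t \in \Delta_j}\bigl|[\tau_j^0,\ldots,\tau_j^{2r},t]Kv\bigr| \le C_4 \norm{v}_\infty h^{-2r} = O(h)$, and Lemma \ref{le2} gives $\sup_{t \in \Delta_j}\bigl|[\tau_j^0,\ldots,\tau_j^{2r},t,t]Kv\bigr| \le C_8 \norm{v}_\infty h^{-2r} = O(h)$. Since $Kv \in C^1([0,1])$ we have $[\tau_j^0,\ldots,\tau_j^{2r},t,t]Kv = \frac{d}{dt}[\tau_j^0,\ldots,\tau_j^{2r},t]Kv$, so $t \mapsto [\tau_j^0,\ldots,\tau_j^{2r},t]Kv$ is $C^1$ on $\Delta_j$ with derivative $O(h)$. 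Put $f_j^s(t) = \kappa(s,t)[\tau_j^0,\ldots,\tau_j^{2r},t]Kv$. For $j \ne i$ the kernel $\kappa(s,\cdot)$ is smooth on $\Delta_j$, hence $f_j^s \in C^1(\Delta_j)$ with $|(f_j^s)'(t)| = |D^{(0,1)}\kappa(s,t)[\tau_j^0,\ldots,\tau_j^{2r},t]Kv + \kappa(s,t)[\tau_j^0,\ldots,\tau_j^{2r},t,t]Kv| \le 2C_1\cdot O(h) = O(h)$ on $\Delta_j$. Because the $2r+1$ equally spaced nodes $\tau_j^k = t_{j-1}+\tfrac{kh}{2r}$ are symmetric about $m_j := (t_{j-1}+t_j)/2$, the polynomial $\Psi_j$ is odd about $m_j$, so $\int_{\Delta_j}\Psi_j(t)\,dt = 0$. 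Subtracting the constant $f_j^s(m_j)$ and using $|\Psi_j(t)| \le h^{2r+1}$,
\begin{equation*}
	\Bigl|\int_{\Delta_j} \kappa(s,t)\,\Psi_j(t)\,[\tau_j^0,\ldots,\tau_j^{2r},t]Kv\; dt\Bigr| = \Bigl|\int_{\Delta_j} \bigl(f_j^s(t)-f_j^s(m_j)\bigr)\Psi_j(t)\,dt\Bigr| \le \tfrac{h}{2}\cdot O(h)\cdot h^{2r+1}\cdot h = O(h^{2r+4}),
\end{equation*}
so the sum over the $n-1$ indices $j \ne i$ is $O(h^{2r+3})$. For $j = i$ the function $f_i^s$ is still continuous on $\Delta_i$ (since $\kappa_1(s,s)=\kappa_2(s,s)$) and piecewise $C^1$ with one-sided derivatives $O(h)$; subtracting $f_i^s(s)$ and again using $\int_{\Delta_i}\Psi_i = 0$ bounds this single term by $h\cdot O(h)\cdot h^{2r+1}\cdot h = O(h^{2r+4})$. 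Hence $\norm{K(I-P_n)K(I-P_n)x}_\infty = O(h^{2r+3})$. Note that this is just the argument of Proposition \ref{p1} with $x$ replaced by $Kv$: the extra factor $h$ over $O(h^{2r+2})$ comes from Lemmas \ref{le1}--\ref{le2} trading the $h^{-2r}$ against $\norm{v}_\infty = O(h^{2r+1})$.

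\emph{The case $r=0$.} Here $\Psi_j$ is no longer odd about $m_j$, so the estimate above only yields $O(h^3)$. To reach $O(h^4)$ I would write $K(I-P_0)K(I-P_0)x = K\bigl((I-P_0)g\bigr)$ with $g := K(I-P_0)x$, note $\norm{(I-P_0)g}_\infty = O(h^3)$ by Proposition \ref{p2}, and extract one more order from the outer $K$. A Leibniz computation of $(Kv)'$ and $(Kv)''$ (with \eqref{Eq: 3}) shows $\norm{g}_\infty, \norm{g'}_\infty$ and $\sup_{\Delta_j}|g''|$ are all $O(h)$; one then expands $\kappa(s,\cdot)$ and $g$ to second order about the node of $\Delta_j$ and uses the midpoint (superconvergence) property of $P_0$ recorded in Chatelin--Lebbar and already invoked in Proposition \ref{p2}. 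The surviving leading term collects into a constant times $h^2 \int_0^1(\cdots)\,(I-P_0)x(t)\,dt$, an integral of $(I-P_0)x$ against a smooth function, which is $O(h^2)$ by that same superconvergence, while the remainder is $O(h^4)$; this gives the claim.

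I expect the $r=0$ step to be the real obstacle: the trivial estimate $\norm{Kw}_\infty \le \norm{K}\,\norm{w}_\infty$ costs the last power of $h$, so the extra order has to be squeezed out of the cancellation between the midpoint-quadrature leading term and the oscillatory structure of $(I-P_0)x$ inside $K(I-P_0)x$, with the constant-tracking done carefully enough to land exactly at $h^4$. For $r \ge 1$ the analogous two-order gain is clean, because the symmetry $\int_{\Delta_j}\Psi_j = 0$ permits one constant subtraction while Lemmas \ref{le1}--\ref{le2} supply the other order.
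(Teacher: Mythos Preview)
Your treatment of the case $r\ge1$ is essentially the paper's own proof: the paper also writes $K(I-P_n)K(I-P_n)x(s)$ as $\sum_j\int_{\Delta_j}\kappa(s,t)\Psi_j(t)\,[\tau_j^0,\ldots,\tau_j^{2r},t]K(I-P_n)x\,dt$, sets $g_j^s(t)=\kappa(s,t)[\tau_j^0,\ldots,\tau_j^{2r},t]K(I-P_n)x$, bounds $(g_j^s)'$ via Lemmas~\ref{le1}--\ref{le2} together with $\|(I-P_n)x\|_\infty=O(h^{2r+1})$, and then invokes the argument of Proposition~\ref{p1} (the subtraction of a constant and $\int_{\Delta_j}\Psi_j=0$) to reach $O(h^{2r+3})$. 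So for $r\ge1$ you have reproduced the paper's argument.

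For $r=0$ the paper does not argue directly at all: it simply cites Kulkarni--Nidhin \cite[Proposition~3.7]{RPKTJN} for the bound $\|K(I-P_n)K(I-P_n)x\|_\infty=O(h^4)$. Your sketch for $r=0$ is therefore not what the paper does, and as written it is not a proof either: you correctly observe that the trivial bound through Proposition~\ref{p2} stalls at $O(h^3)$, but the proposed rescue --- expanding to second order and invoking a ``midpoint (superconvergence) property of $P_0$'' --- is problematic because in this paper the single $r=0$ collocation node is deliberately \emph{not} the midpoint (that would be the Gauss point), so $P_0$ has no midpoint symmetry to exploit, and the Chatelin--Lebbar input used in Proposition~\ref{p2} is the derivative estimate $\|(K(I-P_n)x)'\|_\infty=O(h^2)$ rather than a midpoint-quadrature identity. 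Your own closing paragraph essentially concedes this gap. In short: $r\ge1$ is fine and matches the paper; $r=0$ needs either the cited external result or a genuinely different argument that does not rely on node symmetry.
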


\begin{proof}
	For any $s \in [0,1]$, 
	\begin{align} \label{Eq: 23}
		K(I-P_n)K(I-P_n)x(s) &=\int_0^1  \kappa(s,t) (I-P_n)K(I-P_n)x(t) ~dt \notag \\
		&= \sum_{j=1}^{n} \int_{t_{j-1}}^{t_j}  \kappa(s,t) (I-P_{n,j})K(I-P_n)x(t) ~dt  \notag \\
		&= \sum_{j=1}^{n} ~\int_{t_{j-1}}^{t_j}  \kappa(s,t) \left( \left[\tau_j^0,\tau_j^1,\ldots, \tau_j^{2r},t\right]K(I-P_n)x \right) \Psi_j(t) ~dt.
	\end{align}
	Let $s \in [t_{i-1},t_i] \subset [0,1]$, for some $1 \le i \le n$. Let
	\begin{align*}
		g_j^s(t) =  \kappa(s,t) \left( \left[\tau_j^0,\tau_j^1,\ldots, \tau_j^{2r},t\right]K(I-P_n)x \right), \quad t \in [t_{j-1}, t_j], ~ \text{for} ~ j=1,2,\ldots,n.
	\end{align*}
	Note that
	\begin{equation*}
		g_j^s(t) ~= ~
		\begin{cases}
			\kappa_1(s,t) \left( \left[\tau_j^0,\tau_j^1,\ldots, \tau_j^{2r},t\right]K(I-P_n)x \right) ~~~          &  j \le i,\\
			\kappa_2(s,t) \left( \left[\tau_j^0,\tau_j^1,\ldots, \tau_j^{2r},t\right]K(I-P_n)x \right)~~~          & j \ge i.\\
		\end{cases} 
	\end{equation*} 
	Denote
	$$
	\left ( \delta_j^{2r}  K(I-P_n)x \right ) [t] =\left( \left[\tau_j^0,\tau_j^1,\ldots, \tau_j^{2r},t\right]K(I-P_n)x \right),
	$$
	and
	$$
	\left ( \delta_j^{2r}  K(I-P_n)x \right ) [t, t] =\left( \left[\tau_j^0,\tau_j^1,\ldots, \tau_j^{2r},t, t \right]K(I-P_n)x \right).
	$$  
	Note that the function $g_j^s$ is continuous on $[t_{j-1}, t_j]$ and differentiable on $(t_{j-1}, t_j)$ for $j \ne i$. Then, it follows that
	\begin{equation*}
		(g_j^s)'(t) ~=~
		\begin{cases}
			\kappa_1(s,t) \left ( \delta_j^{2r}  K(I-P_n)x \right ) [t, t] + \left ( \delta_j^{2r}  K(I-P_n)x \right ) [t]~ D^{(0,1)} \kappa_1(s,t) ~         &  j \le i,\\
			\kappa_2(s,t) \left ( \delta_j^{2r}  K(I-P_n)x \right ) [t, t] + \left ( \delta_j^{2r}  K(I-P_n)x \right ) [t]~ D^{(0,1)} \kappa_2(s,t)~         & j \ge i.\\
		\end{cases} 
	\end{equation*} 
	Then by Lemma \ref{le1}, Lemma \ref{le2} and the estimate \eqref{Eq: 5}, we have
	$$
	\norm{(g_j^s)'(t)}_\infty \le C_9 C_1 \norm{(I-P_n)x}_\infty h^{-2r}  \le C_9 C_2 C_1 \norm{x^{(2r + 1)}}_\infty h,
	$$
	where $C_9$ is some constant. Replacing $g_j^s$ in (\eqref{Eq: 23}), we write
	\begin{align*}
		K(I-P_n)K(I-P_n)x(s) &= \sum_{\substack{j=1 \\  j\ne i} }^{n}~ 	\int_{t_{j-1}}^{t_j}  g_j^s(t) \Psi_j(t) ~dt + \int_{t_{i-1}}^{t_i}  g_i^s(t) \Psi_i(t) ~dt.
	\end{align*}
	By employing similar argument as presented in the proof of Proposition \ref{p1}, we acquire 
	$$
	\left|K(I-P_n)K(I-P_n)x(s)\right| \le 2 C_9 C_2 C_1 \norm{x^{(2r + 1)}}_\infty h^{2r + 3}.
	$$
	This proves the proposition for case $r\ge 1$.\\
	If $r=0$ and let $x \in C^2([0,1])$, then from Kulkarni-Nidhin \cite[Proposition 3.7]{RPKTJN} it follows that
	$$
	\norm{K(I-P_n)K(I-P_n)x }_\infty = O(h^4).
	$$
	That is our desired estimate.
\end{proof}
%%%%%%%%%%%%%%%%%%%%%%%%%%%%%%%%%%%%%%%%%%%%%%%%%%%%%%%%%%%%%%%%%%%%%%%%%%%%%%%%%%%%%%%%%%%%%%%%%%%%%%%%%%%%%%%%%%%%%%%%%%%%
One of our main results which determine the order of convergence for both the collocation solution $\phi_n^C$ and the iterated collocation solution $\phi_n^S$ is proved as follows.

\begin{theorem} \label{t4}
	Let $K$ be the Fredholm integral operator defined by \eqref{Eq: 1} with a Green's function type kernel. Let $\phi \in C^{2r+2}([0,1])$ be the unique solution of the equation  \eqref{Eq: 2} and $P_n$ be the interpolatory operator onto the approximating space $X_n$ defined by \eqref{Eq: 4}. Let $\phi_n^C$ be the unique solution of \eqref{Eq: 18} and the iterated collocation solution $\phi_n^S$ is defined by \eqref{Eq: 19}. Then
	\begin{eqnarray*} 
		\norm{\phi - \phi_n^C }_\infty = O\left(h^{2r +1}\right), \quad \norm{\phi - \phi_n^S }_\infty = O\left(h^{2r + 2}\right).
	\end{eqnarray*}
\end{theorem}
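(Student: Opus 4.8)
The plan is to run the classical operator-theoretic (Sloan-type) error analysis, feeding in Proposition~\ref{p1} as the one ingredient that encodes the Green's-kernel structure. The first thing I would record is that $I-P_nK$ is invertible with $\sup_n\norm{(I-P_nK)^{-1}}<\infty$ for all large $n$: since $K$ is compact, $P_nx\to x$ for every $x\in C[0,1]$, and $\sup_n\norm{P_n}=:p<\infty$, the pointwise convergence $P_n\to I$ is uniform on the relatively compact image of the unit ball under $K$, so $\norm{(I-P_n)K}_\infty\to0$, and a Neumann-series perturbation of the invertible operator $I-K$ gives the claim. The algebraic identity $(I-KP_n)^{-1}=I+K(I-P_nK)^{-1}P_n$ (valid whenever $I-P_nK$ is invertible) then shows that $I-KP_n$ is invertible with uniformly bounded inverse as well.

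For the collocation solution, combining \eqref{Eq: 2} and \eqref{Eq: 18} gives the error equation $(I-P_nK)(\phi-\phi_n^C)=(I-P_n)\phi$, hence $\phi-\phi_n^C=(I-P_nK)^{-1}(I-P_n)\phi$. Taking the maximum over the subintervals $\Delta_j$ in \eqref{Eq: 5} (legitimate since $\phi\in C^{2r+2}([0,1])\subset C^{2r+1}([0,1])$) gives $\norm{(I-P_n)\phi}_\infty\le C_2\norm{\phi^{(2r+1)}}_\infty h^{2r+1}$, and therefore $\norm{\phi-\phi_n^C}_\infty=O(h^{2r+1})$.

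For the iterated collocation solution, applying $P_n$ to \eqref{Eq: 19} and using \eqref{Eq: 18} yields $P_n\phi_n^S=\phi_n^C$, so that $\phi_n^S=KP_n\phi_n^S+f$, i.e. $(I-KP_n)\phi_n^S=f$; subtracting this from \eqref{Eq: 2} and using $\phi=K\phi+f$ gives $(I-KP_n)(\phi-\phi_n^S)=K(I-P_n)\phi$, whence $\phi-\phi_n^S=(I-KP_n)^{-1}K(I-P_n)\phi$. The crucial point is that one must \emph{not} bound $K(I-P_n)\phi$ by $\norm{K}\,\norm{(I-P_n)\phi}_\infty=O(h^{2r+1})$; instead Proposition~\ref{p1} gives $\norm{K(I-P_n)\phi}_\infty\le 2C_1\norm{\phi}_{2r+2,\infty}h^{2r+2}$, and together with the uniform bound on $(I-KP_n)^{-1}$ from the first step this yields $\norm{\phi-\phi_n^S}_\infty=O(h^{2r+2})$.

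The theorem itself is thus a short piece of abstract bookkeeping; the genuine work — and the only place where the non-smoothness of $\kappa$ across the diagonal enters — has already been carried out in Proposition~\ref{p1}, where the one-order gain for $K(I-P_n)\phi$ rests on $\int_{t_{j-1}}^{t_j}\Psi_j(t)\,dt=0$ (the $2r+1$ equispaced nodes are symmetric about the midpoint of each $\Delta_j$, so $\Psi_j$ is antisymmetric there) combined with the divided-difference bounds of Lemmas~\ref{le1}--\ref{le2}. Within the proof above, the only step requiring a little care is the uniform boundedness of the inverses $(I-P_nK)^{-1}$ and $(I-KP_n)^{-1}$; everything else is a direct manipulation of the defining equations \eqref{Eq: 18} and \eqref{Eq: 19}.
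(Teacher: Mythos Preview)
Your proof is correct and takes essentially the same approach as the paper: both reduce the two errors to $\norm{(I-P_n)\phi}_\infty$ and $\norm{K(I-P_n)\phi}_\infty$ and then apply \eqref{Eq: 5} and Proposition~\ref{p1}, the only difference being that the paper simply quotes the reduction inequalities from Atkinson~\cite[Section~3.4]{KEA0} whereas you derive the error equations and the uniform invertibility of $I-P_nK$ and $I-KP_n$ explicitly. One minor slip in your closing commentary: Proposition~\ref{p1} does not in fact use Lemmas~\ref{le1}--\ref{le2} (those bound divided differences of $Kx$ and are invoked only in Proposition~\ref{p3}); the extra order in Proposition~\ref{p1} comes from $\int_{t_{j-1}}^{t_j}\Psi_j(t)\,dt=0$ together with ordinary divided-difference estimates for the smooth function $\phi$.
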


\begin{proof}
	We quote the following error estimates from Atkinson \cite[Section 3.4]{KEA0}:
	$$
	\norm{\phi - \phi_n^C }_\infty \leq C_{10} \norm{(I - P_n)\phi}_\infty
	$$
	and
	$$
	\norm{\phi - \phi_n^S }_\infty \leq  C_{10} \norm{K(I - P_n)\phi}_\infty,
	$$
	where $C_{10}$ is a constant independent of $n$. 
	Therefore, by inequality \eqref{Eq: 5} and Proposition \ref{p1}, we have
	$$
	\norm{\phi - \phi_n^C }_\infty = O\left(h^{2r + 1}\right), \quad \norm{\phi - \phi_n^S }_\infty = O\left(h^{2r + 2}\right).
	$$
\end{proof}
%%%%%%%%%%%%%%%%%%%%%%%%%%%%%%%%%%%%%%%%%%%%%%%%%%%%%%%%%%%%%%%%%%%%%%%%%%%%%%%%%%%%%%%%%%%%%%%%%%%%%%%%%%%%%%%%%%%%%%%%%%%
We now prove our main result by the following theorem which determine the orders of convergence for both the modified collocation solution $\phi_n^M$ and the iterated modified collocation solution $\tilde{\phi}_n^M$.

\begin{theorem} \label{t5}
	Let $K$ be the Fredholm integral operator defined by \eqref{Eq: 1} with a Green's function type kernel. Let $\phi \in C^{2r+2}([0,1])$ be the unique solution of the integral equation  \eqref{Eq: 2}. Let $P_n$ be the interpolatory operator onto the approximating space $X_n$ defined by \eqref{Eq: 4}. Let $\phi_n^M$ be the unique solution of \eqref{Eq: 20} and the iterated modified collocation solution $\tilde{\phi}_n^M$ is defined by \eqref{Eq: 21}. Then
	\begin{equation*}
		\norm{\phi - \phi_n^M }_\infty =
		\begin{cases}
			O\left(h^{2r + 2}\right)~~~          &  r \ge 1,\\
			O\left(h^{3}\right)~~~          & r=0,\\
		\end{cases} 
	\end{equation*}
	and
	\begin{equation*}
		\norm{\phi - \tilde{\phi}_n^M }_\infty =
		\begin{cases}
			O\left(h^{2r + 3}\right)~~~          &  r \ge 1,\\
			O\left(h^{4}\right)~~~          & r=0.\\
		\end{cases} 
	\end{equation*}
\end{theorem}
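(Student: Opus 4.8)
The plan is to exploit the operator identity $K - K_n^M = (I - P_n)K(I - P_n)$, which is immediate from $K_n^M = P_nK + KP_n - P_nKP_n$. Hence $I - K_n^M = (I - K) + (I - P_n)K(I - P_n)$. Since $K$ is compact and $P_n \to I$ pointwise with $\norm{P_n}$ uniformly bounded, we have $\norm{(I - P_n)K} \to 0$, and therefore $\norm{(I - P_n)K(I - P_n)} \le \norm{(I - P_n)K}\,\norm{I - P_n} \to 0$. Writing $I - K_n^M = (I - K)\bigl[I + (I - K)^{-1}(I - P_n)K(I - P_n)\bigr]$ and using a Neumann series, $I - K_n^M$ is invertible for all large $n$ with $\norm{(I - K_n^M)^{-1}}$ bounded uniformly in $n$ (cf. Kulkarni \cite{RPK1}).

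Subtracting $(I - K_n^M)\phi_n^M = f$ from $(I - K_n^M)\phi = (I-K)\phi + (K - K_n^M)\phi = f + (I - P_n)K(I - P_n)\phi$ gives
$$
\phi - \phi_n^M = (I - K_n^M)^{-1}(I - P_n)K(I - P_n)\phi .
$$
Taking norms and applying Proposition \ref{p2} (valid because $\phi \in C^{2r+2}([0,1])$) yields $\norm{\phi - \phi_n^M}_\infty = O(h^{2r+2})$ for $r \ge 1$ and $O(h^3)$ for $r = 0$, which is the first assertion.

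For the iterated solution, since $\phi = K\phi + f$ and $\tilde{\phi}_n^M = K\phi_n^M + f$, we have $\phi - \tilde{\phi}_n^M = K(\phi - \phi_n^M)$. I would then split $(I - K_n^M)^{-1} = I + (I - K_n^M)^{-1}K_n^M$ to obtain
$$
\phi - \tilde{\phi}_n^M = K(I - P_n)K(I - P_n)\phi + K(I - K_n^M)^{-1}K_n^M(I - P_n)K(I - P_n)\phi .
$$
By Proposition \ref{p3} (again $\phi \in C^{2r+2} \subset C^{2r+1}$), the first term is $O(h^{2r+3})$ for $r \ge 1$ and $O(h^4)$ for $r = 0$. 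For the second term, the key point is that $K_n^M = P_nK + (I - P_n)KP_n$, so $K_n^M(I - P_n) = P_nK(I - P_n) + (I - P_n)KP_n(I - P_n) = P_nK(I - P_n)$ since $P_n(I - P_n) = 0$; hence $K_n^M(I - P_n)K(I - P_n)\phi = P_nK(I - P_n)K(I - P_n)\phi$, whose norm is at most $\norm{P_n}$ times the quantity bounded in Proposition \ref{p3}. Together with the uniform bound on $\norm{K(I - K_n^M)^{-1}}$, the second term is also $O(h^{2r+3})$ (resp. $O(h^4)$), and adding the two pieces completes the proof.

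The main obstacle --- and precisely the mechanism producing the extra order of convergence --- is this last cancellation. A crude estimate of $K(I - K_n^M)^{-1}K_n^M(I - P_n)K(I - P_n)\phi$ only reproduces the $O(h^{2r+2})$ rate of $\phi - \phi_n^M$; it is the identity $P_n(I - P_n) = 0$ that lets us expose a further smoothing factor $K(I - P_n)$ and invoke Proposition \ref{p3} to gain the additional power of $h$. The remaining steps are routine manipulations with uniformly bounded operators and the estimates already established in Propositions \ref{p1}--\ref{p3}.
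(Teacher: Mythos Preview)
Your argument is correct. For the bound on $\phi-\phi_n^M$ you proceed exactly as the paper does: both reduce to $\norm{\phi-\phi_n^M}_\infty\le C\,\norm{(I-P_n)K(I-P_n)\phi}_\infty$ via the identity $K-K_n^M=(I-P_n)K(I-P_n)$ and the uniform invertibility of $I-K_n^M$, and then invoke Proposition~\ref{p2}.

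For the iterated estimate the two proofs diverge. The paper quotes from Kulkarni the bound
\[
\norm{\phi-\tilde{\phi}_n^M}_\infty \le \norm{(I-K)^{-1}}\Bigl(\norm{K(I-P_n)K(I-P_n)\phi}_\infty+\norm{K(I-P_n)K(I-P_n)}\,\norm{\phi-\phi_n^M}_\infty\Bigr),
\]
which comes from $(I-K)(\phi-\tilde{\phi}_n^M)=K(I-P_n)K(I-P_n)\phi_n^M$ followed by writing $\phi_n^M=\phi-(\phi-\phi_n^M)$. This forces the paper to prove separately the operator-norm estimate $\norm{K(I-P_n)K(I-P_n)}=O(h)$ (its equation~\eqref{Eq: 24}); the second term then becomes $O(h)\cdot O(h^{2r+2})=O(h^{2r+3})$. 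Your route instead expands $(I-K_n^M)^{-1}=I+(I-K_n^M)^{-1}K_n^M$ and uses the algebraic cancellation $K_n^M(I-P_n)=P_nK(I-P_n)$ to reduce the residual term directly to $\norm{P_n}\,\norm{K(I-P_n)K(I-P_n)\phi}_\infty$, so Proposition~\ref{p3} alone suffices and no operator-norm estimate is needed. Your approach is slightly more economical; the paper's has the advantage of citing a ready-made inequality from the literature and making explicit the $O(h)$ smoothing of $K(I-P_n)K(I-P_n)$ as an operator, which is of independent interest. Both lead to the same orders.
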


\begin{proof}
	Since $K$ is a bounded linear operator and using \eqref{Eq: 5}, we get
	$$
	\norm{K(I - P_n)K(I - P_n) x}_\infty \le C_2 \norm{K} \norm{(K(I - P_n) x)'}_\infty h.
	$$
	It follows from inequality \eqref{Eq: 4} that
	\begin{align*}
		\norm{K(I - P_n)K(I - P_n) x}_\infty
		&\le C_2 C_1  \norm{K} \norm{(I - P_n) x}_\infty h \\
		&\le  C_1 C_2 \norm{K} \norm{I - P_n} \norm{x}_\infty h \\
		&\le C_1 C_2 (1 + p) \norm{K} \norm{x}_\infty h.
	\end{align*}
	Then, 
	\begin{equation}\label{Eq: 24}
		\norm{K(I - P_n)K(I - P_n)} = O(h).
	\end{equation} 
	We quote the following inequality from Kulkarni \cite{RPK1}:\\
	For all large $n$,
	\begin{align*}
		\norm{\phi - \phi_n^M }_\infty \leq C_{11} \norm{(I - P_n)K(I - P_n)\phi}_\infty,
	\end{align*} and
	\begin{align*}
		\norm{\phi - \tilde{\phi}_n^M }_\infty \leq  \norm{(I - K)^{-1}} ( \norm{K(I - P_n)K(I - P_n)\phi}_\infty \\ + \norm{K(I - P_n)K(I - P_n)} \norm{\phi - \phi_n^M }_\infty ),
	\end{align*}
	where $C_{11}$ is a constant independent of $n$.\\
	From Proposition \ref{p2}, we get
	$$
	\norm{\phi - \phi_n^M }_\infty = 
	\begin{cases}
		O\left(h^{2r + 2}\right)~~~          &  r \ge 1,\\
		O\left(h^{3}\right)~~~          & r=0.\\
	\end{cases} 
	$$
	Further, by Proposition \ref{p3} and estimate \eqref{Eq: 24}, we obtain
	$$ 
	\norm{\phi - \tilde{\phi}_n^M }_\infty = 
	\begin{cases}
		O\left(h^{2r + 3}\right)~~~          &  r \ge 1,\\
		O\left(h^{4}\right)~~~          & r=0.\\
	\end{cases} 
	$$
	This completes the proof.
\end{proof}

%%%%%%%%%%%%%%%%%%%%%%%%%%%%%%%%%%%%%%%%%%%%%%%%%%%%%%%%%%%%%%%%%%%%%%%%%%%%%%%%%%%%%%%%%%%%%%%%%%%%%%%%%%%%%%%%%%%%%%%%%
\section{Conclusion}
We consider collocation method and its variants for approximate solutions of a Fredholm integral equation. The kernel of the integral operator is of the type of Green's function, and the projection is chosen to be an interpolatory projection at $2r+1$ collocation points. We have seen that if we choose collocation points as Gauss points (i.e. zeros of the Legendre polynomials), then the iterative methods improve the orders of convergence. It has been observed that if the collocation points are not the Gauss points, then the collocation solution $\phi_n^C$ and iterated collocation solution $\phi_n^S$ converge at the same rate. 

For example, consider $X = C [0, 1]$ and $X_n$ be the space of piecewise constant functions with respect to the uniform partition of $[0,1]$, defined as $\{0 = t_0 < t_1 < \cdots < t_n = 1\}$, where $t_j = \frac{j}{n}$ for $j = 0,1, \ldots, n$. Let $P_n : C[0, 1] \to X_n$ be the interpolatory map at the points $ \tau_j = \frac{3j-2}{3n}$,  for $j = 1, \ldots, n$. Consider the Fredholm integral equation as
$$ 
x - Kx = 2s, \quad x \in X, \; s \in [0,1],
$$ 
and let  $K$ be the following linear operator defined on $C[0, 1]$ as
$$
(Kx)(s) =  \int_0^1 s \: x(t) \: dt  , \quad s \in [0, 1].
$$
Note that $K$ is a compact operator and $1$ is not an eigenvalue of $K$. Also, we see that the above integral equation has a unique solution in $C[0, 1]$ as  $\phi(s) = 4s$ for all $s \in [0, 1]$.  
Observe that
$$
(I - P_n) \phi(s)= 4s - \frac{4(3j-2)}{3n}, \quad s \in  [t_{j-1}, t_{j} ],
$$
and 
$$ 
\int_0^1 (I -P_n) \phi(t) ~dt =\sum_{j=1}^{n} ~ \int_{t_{j-1}}^{t_j} \left(4t -\frac{4(3j-2)}{3n} \right) ~ dt = \frac{2}{3n}.
$$
Therefore
$$
\norm{K(I -P_n) \phi }_\infty  =  \max_{s\in [0,1]} \left| \left( \int_0^1 (I -P_n) \phi(t) ~dt \right) s \right|  = \frac{2}{3n},
$$
$$ 
\norm{(I - P_n ) \phi}_\infty = \max_{1 \le j \le n} \left\{ \max_{s \in  [t_{j-1}, t_{j} ]} |(I - P_n ) \phi(s) | \right\} \le \frac{8}{3n}.
$$
We know that the iterated collocation solution is superconvergent if and only if
$$
\frac{\norm{K(I -P_n) \phi }_\infty }{ \norm{(I - P_n ) \phi}_\infty} \to 0 \; \text{as} \; n \to \infty.
$$
In this case, the above condition fails as we see that
$$
\frac{\norm{K(I -P_n) \phi }_\infty }{ \norm{(I - P_n ) \phi}_\infty} \ge \frac{1}{4}.
$$

However, in our article we consider the sequence of projections as the interpolation at $2r + 1$ points (which are not Gauss points) onto a space of piecewise polynomials of degree $\leq 2r$ with respect to a uniform partition of $[0, 1]$. Then we apply the iterated collocation and iterated modified collocation methods based on these projections. The results obtained in the Theorem \ref{t4} and Theorem \ref{t5}, in which the orders of convergence of the iterated solutions are obtained are the main contribution of this paper. These results show the improvement in the orders of convergence for both the iterated collocation solution and iterated modified collocation solution, respectively.

%%%%%%%%%%%%%%%%%%%%%%%%%%%%%%%%%%%%%%%%%%%%%%%%%%%%%%%%%%%%%%%%%%%%%%%%%%%%%%%%%%%%%%%%%%%%%%%%%%%%%%%%%%%%%%%%%%%%

\section*{Declarations}
The authors have no conflict of interest to declare.

%%%%%%%%%%%%%%%%%%%%%%%%%%%%%%%%%%%%%%%%%%%%%%%%%%%%%%%%%%%%%%%%%%%%%%%%%%%%%%%%%%%%%%%%%%%%%%%%%%%%%%%%%%%%%%%%%%%%

\end{document}